\title[Diffeomorphism groups of Morse-Bott foliations on lens spaces, 2]{Homotopy types of diffeomorphism groups of polar Morse-Bott foliations on lens spaces, 2}
\author{Sergiy Maksymenko}
\address{Algebra and Topology Department, Institute of Mathematics NAS of Ukraine \\
Teresh\-chen\-kivska str., 3, Kyiv, 01024, Ukraine}
\email{maks@imath.kiev.ua}
\keywords{Foliation, diffeomorphism, homotopy type, lens space, solid torus}
\subjclass[2020]{
    57R30, % Foliations in differential topology; geometric theory
    57T20% Homotopy groups of topological groups and homogeneous spaces
}
\newcommand\mycolor[1]{}
\setlist[enumerate]{itemsep=0.3ex, topsep=0.3ex, label={\rm(\arabic*)}}
\setlist[itemize]{itemsep=0.3ex, topsep=0.3ex, leftmargin=4ex}
\newtheorem{theorem}[subsection]{Theorem}
\newtheorem{lemma}[subsection]{Lemma}
\newtheorem{corollary}[subsection]{Corollary}
\newtheorem{remark}[subsection]{Remark}
\newtheorem{example}[subsection]{Example}
\newtheorem{definition}[subsection]{Definition}
\newcommand\testshape{family=\f@family; series=\f@series; shape=\f@shape.}
\def\myemphInternal#1{\if n\f@shape%
\begingroup\itshape #1\endgroup\/%
\else\begingroup\sf\itshape\small #1\endgroup%
\fi}
\def\myemph{\futurelet\testchar\MaybeOptArgmyemph}
\def\MaybeOptArgmyemph{\ifx[\testchar \let\next\OptArgmyemph
                 \else \let\next\NoOptArgmyemph \fi \next}
\def\OptArgmyemph[#1]#2{\index{#1}\myemphInternal{#2}}
\def\NoOptArgmyemph#1{\myemphInternal{#1}}
\newcommand\xmonoArrow[1]{\lhook\joinrel\xrightarrow{~#1~}}
\newcommand\xepiArrow[1]{\xrightarrow{#1}\!\!\!\!\!\to}
\newcommand\amatr[4]{\left(\!\begin{smallmatrix}#1\ &#2\\[0.5mm] #3\ &#4 \end{smallmatrix}\!\right)}
\newcommand\avect[2]{\left(\begin{smallmatrix}#1 \\ #2 \end{smallmatrix}\right)}
\newcommand\ddd[2]{\tfrac{\partial#1}{\partial#2}}
\newcommand\Aman{A}
\newcommand\Bman{B}
\newcommand\Cman{C}
\newcommand\Eman{E}
\newcommand\Lman{L}
\newcommand\Mman{M}
\newcommand\Nman{N}
\newcommand\Pman{P}
\newcommand\Qman{Q}
\newcommand\Uman{U}
\newcommand\Vman{V}
\newcommand\Wman{W}
\newcommand\Xman{X}
\newcommand\bC{\mathbb{C}}
\newcommand\bD{\mathbb{D}}
\newcommand\bR{\mathbb{R}}
\newcommand\bZ{\mathbb{Z}}
\newcommand\id{\mathrm{id}}          % identity map    
\newcommand\eps{\varepsilon}                   % epsilon
\newcommand\nrm[1]{\vert#1\vert}
\newcommand\dnrm[1]{\vert#1\vert}
\newcommand\restr[2]{#1\vert_{#2}}
\newcommand\GL{\mathrm{GL}}
\newcommand\SO{\mathrm{SO}}
\newcommand\Ort{\mathrm{O}}
\newcommand\Aut{\mathrm{Aut}}       % automorphisms
\newcommand\Diff{\mathcal{D}}       % diffeomorphisms
\newcommand\DiffPl{\Diff^{+}}       % diffeomorphisms preserving orientation
\newcommand\Isom{\mathrm{Isom}}     % group of isometries
\newcommand\Stab{\mathcal{S}}       % stabilizer
\newcommand\RP[1]{\mathbb{RP}^{#1}}
\newcommand\Dih{\mathrm{Dih}}       % Dihedral extension 
\newcommand\Cr[1]{\mathcal{C}^{#1}}
\newcommand\Cinfty{\mathcal{C}^{\infty}}
\newcommand\Ci[2]{\mathcal{C}^{\infty}(#1,#2)}               % space of C^\infty maps
\newcommand\Stabilizer[1]{\Stab(#1)}
\newcommand\fixsymbol{\mathrm{fix}}%{\!\times}
\newcommand\invsymbol{}
\newcommand\nbsymbol{\mathrm{nb}}
\newcommand\folsymbol{*}%{\approx}
\newcommand\DiffInv[3][\empty]{\Diff_{\invsymbol}(#2,#3\ifx\empty #1\relax\else,#1\fi)}
\newcommand\DiffFix[3][\empty]{\Diff_{\fixsymbol}(#2,#3\ifx\empty #1\relax\else,#1\fi)}
\newcommand\DiffNb[3][\empty]{\Diff_{\nbsymbol}(#2,#3\ifx\empty #1\relax\else,#1\fi)}
\newcommand\DiffHFix[3][\empty]{\Diff^{0}_{\fixsymbol}(#2,#3\ifx\empty #1\relax\else,#1\fi)}
\newcommand\DiffHNb[3][\empty]{\Diff^{0}_{\nbsymbol}(#2,#3\ifx\empty #1\relax\else,#1\fi)}
\newcommand\DiffPlusFix[3][\empty]{\Diff^{+}_{\fixsymbol}(#2,#3\ifx\empty #1\relax\else,#1\fi)}
\newcommand\FDiff[2][\empty]{\Diff^{\folsymbol}(#2\ifx\empty #1\relax\else,#1\fi)}
\newcommand\FDiffFix[2][\empty]{\Diff^{\folsymbol}_{\fixsymbol}(#2\ifx\empty #1\relax\else,#1\fi)}
\newcommand\FDiffA[2][\empty]{\Diff^{=}(#2\ifx\empty #1\relax\else,#1\fi)}
\newcommand\VBAut[2][\empty]{\GL(#2\ifx\empty #1\relax\else,#1\fi)}
\newcommand\DiffLP{\Diff}  % leaf preserving diffeomorphisms
\newcommand\DiffLPInv[3][\empty]{\DiffLP_{inv}(#2,#3\ifx\empty#1\relax\else,#1\fi)}
\newcommand\DiffLPFix[3][\empty]{\DiffLP_{fix}(#2,#3\ifx\empty#1\relax\else,#1\fi)}
\newcommand\DiffLPNb[3][\empty]{\DiffLP_{nb}(#2,#3\ifx\empty#1\relax\else,#1\fi)}
\newcommand\func{f}
\newcommand\gfunc{g}
\newcommand\dif{h}
\newcommand\gdif{g}
\newcommand\rphi{{\mycolor{red}\phi}}
\newcommand\px{x}
\newcommand\py{y}
\newcommand\pz{z}
\newcommand\pu{u}
\newcommand\pv{v}
\newcommand\pw{w}
\newcommand\pui[1]{\pu_{#1}}
\newcommand\pvi[1]{\pv_{#1}}
\newcommand\pxi[1]{\px_{#1}}
\newcommand\pyi[1]{\py_{#1}}
\newcommand\Circle{S^1}
\newcommand\UInt{[0;1]}      % unit interval [0;1]
\newcommand\FolDiffxi[1]{\FolDiff(\AFoliation_{#1})}
\newcommand\FolDiffPlxi[1]{\FolDiffPl(\AFoliation_{#1})}
\newcommand\FolLpDiffxi[1]{\FolLpDiff(\AFoliation_{#1})}
\newcommand\term[2][\empty]{\myemph[#1]{#2}}
\newcommand\DiffM{\Diff(\Mman)}
\newcommand\DiffMX{\Diff(\Mman,\Xman)}
\newcommand\DiffR{\Diff(\bR)}
\newcommand\DiffA{\Diff(\Aman)}
\newcommand\DiffPlA{\DiffPl(\Aman)}
\newcommand\DiffI{\Diff(\UInt)}
\newcommand\DiffPlI{\Diff^{+}(\UInt)}
\newcommand\DiffAM{\DiffA\times\DiffM}
\newcommand\DiffRM{\DiffR\times\DiffM}
\newcommand\CiMR{\Ci{\Mman}{\bR}}
\newcommand\StabLR[1]{\Stab_{\text{\sf LR}}(#1)}   % left-right stabilizer
\newcommand\StabR[1]{\Stab_{\text{\sf R}}(#1)}     % right stabilizer
\newcommand\StabLRImg[1]{\Stab_{\text{\sf LR}}^{img}(#1)}
\newcommand\StabLRImgPl[1]{\Stab_{\text{\sf LR}}^{img+}(#1)}
\newcommand\SMRf{\StabLR{\func}}
\newcommand\SMAf{\StabLRImg{\func}}
\newcommand\SPlMAf{\StabLRImgPl{\func}}
\newcommand\SMf{\StabR{\func}}
\newcommand\SMAfd{\StabLRImg{\func,\partial\ATor}}
\newcommand\SMfd{\StabR{\func,\partial\ATor}}
\newcommand\CinftyEeven{\Cinfty_{\mathrm{ev}}}  % space of even functions
\newcommand\ATor{\mathbf{T}}  % solid torus
\newcommand\AOTor{\mathbf{U}} % interior of the solid torus
\newcommand\gxi{{\mycolor{blue}\xi}} % gluing map
\newcommand\Lpq[2]{L_{#1,#2}} % Lens space Lpq
\newcommand\FolDiff{\Diff^{fol}}    % foliated diffeomorphisms
\newcommand\FolDiffPl{\FolDiff_{+}} % foliated diffeomorphisms preserving some "direction"
\newcommand\FolLpDiff{\Diff^{lp}}   % leaf-preserving diffeomorphisms
\newcommand\Foliation{\mathcal{F}}
\newcommand\GFoliation{\mathcal{G}}
\newcommand\AFoliation{\Foliation}
\newcommand\BFoliation{\GFoliation}
\newcommand\leaf[1]{\Lman_{#1}}
\newcommand\vbp{{\mycolor{blue}p}}  % projection
\newcommand\al{{\mycolor{red}w}}
\newcommand\az{{\mycolor{red}\pz}}
\newcommand\difmatr[1]{\mathsf{#1}}
\newcommand\diftor[1]{\gdif_{#1}}
\newcommand\diflpq[1]{\widehat{#1}}
\newcommand\mXimatr{\difmatr{X}}
\newcommand\mAmatr{\difmatr{A}}
\newcommand\mDtwist{\difmatr{D}}
\newcommand\mLambda{\difmatr{\Lambda}}
\newcommand\mMu{\difmatr{M}}
\newcommand\mTau{\difmatr{T}}
\newcommand\hDtwist{d}
\newcommand\hLambda{\lambda}
\newcommand\hMu{\mu}
\newcommand\hTau{\tau}
\newcommand\hRho[1]{\rho_{#1}}  % rotation
\newcommand\lDtwist{\diflpq{\hDtwist}}
\newcommand\lLambda{\diflpq{\hLambda}}
\newcommand\lMu{\diflpq{\hMu}}
\newcommand\lTau{\diflpq{\hTau}}
\newcommand\lRot[1]{\diflpq{\hRho{#1}}}
\newcommand\lTheta{\diflpq{\theta}}
\newcommand\lSigmaPl{\sigma_{+}}
\newcommand\lSigmaMin{\sigma_{-}}
\newcommand\DiffATor{\Diff(\ATor)}
\newcommand\DiffdATor{\Diff(\ATor, \partial\ATor)}
\newcommand\FolDiffdATor{\FolDiff(\AFoliation, \partial\ATor)}     % foliated diffeomorphisms
\newcommand\FolLpDiffdATor{\FolLpDiff(\AFoliation, \partial\ATor)} % leaf preserving diffeomorphisms
\newcommand\DiffAFol{\FolDiff(\AFoliation)}
\newcommand\DiffPlAFol{\FolDiffPl(\AFoliation)}
\newcommand\DiffAFolLp{\FolLpDiff(\AFoliation)}
\newcommand\MPGSolidTorus{\mathcal{G}} % \pi_0 Diff(S^1 x D^2)
\newcommand\RotSub{\mathcal{R}}        % rotation subgroup
\newcommand\AffDiffATor{\mathcal{T}}   % R x G - semidirect product in GL(2,Z) -  "affine" subgroup of Diff(S^1 x D^2)
\newcommand\ADiffLpq[1]{\mathcal{L}_{#1}}
\newcommand\AFolDiffLpq[1]{\widehat{\mathcal{L}}_{#1}}
\newcommand\FolDiffLpq[2]{\FolDiff(\AFoliation_{#1,#2})}
\newcommand\FolDiffPlLpq[2]{\FolDiffPl(\AFoliation_{#1,#2})}
\newcommand\FolLpDiffLpq[2]{\FolLpDiff(\AFoliation_{#1,#2})}
\newcommand\PAFoliation{\AFoliation}
\newcommand\BLman{\mathbf{\Lman}}
\newcommand\PBLman{\BLman_{\gxi}}
\newcommand\xtor[2]{{}^{#1}_{#2}}
\newcommand\xar[2]{ \ar@{=>}[r]^-{#1}_-{#2}}
\newcommand\diflpdir[4][1.4em]{\xymatrix@C=#1{#2: \xtor{0}{1} \xar{#3}{#4} & \xtor{0}{1}}}
\newcommand\diflprew[4][1.4em]{\xymatrix@C=#1{#2: \xtor{0}{1} \xar{#3}{#4} & \xtor{1}{0}}}
\newcommand\prM{p_2}
\newcommand\prA{p_1}
\newcommand\JProp{{\rm(J)}} % property (J)
\newcommand\Calg[2]{\mathcal{C}_{#1}^{\infty}(#2)}  % algebra of germs
\newcommand\Jideal[2]{J_{#1}(#2)} % Jacobi ideal
\newcommand\CiTwoFol{\Cinfty(\AFoliation_0,\AFoliation_1)}
\newcommand\DiffTwoFol{\Diff(\AFoliation_0,\AFoliation_1)}
\newcommand\jjj[1]{#1_{1}}
\newcommand\iii[1]{#1_{0}}
\newcommand\gA{A}
\newcommand\gB{B}
\newcommand\gC{C}
\newcommand\rt[2]{\!\sqrt[#2]{#1}}
\newcommand\ophi{{\mycolor{green}q}}
\newcommand\prC{p}
\newcommand\sincl{j}
\newcommand\smprM{{\mycolor{red}{\sigma}}}
\newcommand\smprA{{\mycolor{red}{\theta}}}
\begin{document}
\begin{abstract}
Let $\mathcal{F}$ be a Morse-Bott foliation on the solid torus $T=S^1\times D^2$ into $2$-tori parallel to the boundary and one singular central circle.
Gluing two copies of $T$ by some diffeomorphism between their boundaries, one gets a lens space $L_{p,q}$ with a Morse-Bott foliation $\mathcal{F}_{p,q}$ obtained from $\mathcal{F}$ on each copy of $T$ and thus consisting of two singluar circles and parallel $2$-tori.
In the previous paper [O.~Khokliuk, S.~Maksymenko, Journ. Homot. Rel. Struct., 2024, 18, 313-356] there were computed weak homotopy types of the groups $\mathcal{D}^{lp}(\mathcal{F}_{p,q})$ of leaf preserving (i.e.\ leaving invariant each leaf) diffeomorphisms of such foliations.
In the present paper it is shown that the inclusion of these groups into the corresponding group $\mathcal{D}_{+}^{fol}(\mathcal{F}_{p,q})$ of foliated (i.e.\ sending leaves to leaves) diffeomorphisms which do not interchange singular circles are homotopy equivalences.
\end{abstract}

\maketitle

\section{Introduction}
The paper is devoted to computations of homotopy types of diffeomorphism groups of a solid torus and lens spaces preserving foliations by level sets of ``simplest'' Morse-Bott functions whose sets of critical points consist of extreme circles only.

Note that the homotopy types of diffeomorphism groups of compact manifolds in dimensions $1,2$ are described completely, \cite{Smale:ProcAMS:1959, EarleEells:JGD:1969, EarleSchatz:DG:1970, Gramain:ASENS:1973}; in dimension $3$ there is a lot of information, e.g.~\cite{Hatcher:AnnM:1983, Gabai:JDG:2001, HongKalliongisMcCulloughRubinstein:LMN:2012}; while in dimensions $\geq4$ only very specific cases are computed, e.g.~\cite{Novikov:IzvAN:1965, Schultz:Top:1971, Hajduk:BP:1978, DwyerSzczarba:IJM:1983, Kupers:GT:2019, BerglundMadsen:AM:2020}.
Computations for the groups of leaf preserving diffeomorphisms of foliations are usually related with perfectness properties of such groups proved independently by T.~Rybicki~\cite{Rybicki:MonM:1995} and T.~Tsuboi~\cite{Tsuboi:Fol:2005}, which extended the results by M.-R.~Herman~\cite{Herman:CR:1971}, W.~Thurston~\cite{Thurston:BAMS:1974}, J.~Mather~\cite{Mather:Top:1971, Mather:BAMS:1974}, D.~B.~A.~Epstein~\cite{Epstein:CompMath:1970} on simplicity of groups of compactly supported diffeomorphisms isotopic to the identity.

For foliations with singularities there is much less information, e.g.~\cite{Fukui:JMKU:1980, Rybicki:DM:1998, Maksymenko:OsakaJM:2011, LechMichalik:PMD:2013}.
On the other hand, the most relevant references to the present paper are results by K.~Fukui, S.~Ushiki~\cite{FukuiUshiki:JMKU:1975} and K.~Fukui~\cite{Fukui:JJM:1976} on the homotopy type of regular foliations on $3$-manifolds with finitely many Reeb components.

Just to illustrate what is a difficulty with singular foliations consider a simple example.
Let $\func\colon\bR^{3}\to\bR$ be a function, $\mathcal{F}$ a partition into the level sets of $\func$, and $\dif=(a,b,c)\colon\bR^{3}\to\bR^{3}$ a diffeomorphism leaving invariant each leaf of $\mathcal{F}$.

First assume $\func(x,y,z)=z$.
Then $\mathcal{F}$ is a usual ``regular'' foliation consisting of planes parallel to the $xy$-plane, and $\dif$ should be given by a formula $\dif(x,y,z)=\bigl(a(x,y,z),b(x,y,z),z\bigr)$, where for each fixed $z\in\bR$ the map $(x,y)\mapsto\bigl(a(x,y,z),b(x,y,z)\bigr)$ is a diffeomorphism of $\bR^{2}$.
In other words, $\dif$ can be regarded as a one-parametric family of diffeomorphisms of $\bR^{2}$, or an isotopy of $\bR^{2}$, and even as a map of $\bR$ into the group of diffeomorphisms of $\bR^{2}$.
In particular, the information of the homotopy type of $\Diff(\bR^{2})$ might be useful in this case.

On the other hand, suppose that $\func(x,y,z)=x^2+y^2$.
In this case $\mathcal{F}$ consists of vertical cylinders (being regular leaves) and the $z$-axis (being a singular leaf).
Then the assumption that $\dif$ preserves level sets of $\func$ means that the coordinate functions of $\dif$ satisfy the following identity: $a(x,y,z)^2+b(x,y,z)^2=x^2+y^2$.
The problem of describing deformation properties of such diffeomorphisms seems to be much more complicated.

Given smooth manifolds $\Mman,\Pman$ there is a natural action $\Ci{\Mman}{\Pman} \times \Diff(\Mman) \to \Ci{\Mman}{\Pman}$, $(\func,\dif) \mapsto \func\circ\dif$, of the group $\Diff(\Mman)$ of diffeomorphisms of $\Mman$ on the space of smooth maps $\Ci{\Mman}{\Pman}$, e.g.\ \cite{Mather_1:AnnMath:1968, Sergeraert:ASENS:1972, MondNunoBallesteros:Sing:2020}, see also Section~\ref{sect:left-right-actions}.
Then for each $\func\in\Ci{\Mman}{\Pman}$ its stabilizer
\[
    \Stabilizer{\func} = \{\dif\in\Diff(\Mman)\mid \func\circ\dif=\func\}
\]
consists of diffeomorphisms leaving invariant each level set of $\func$, i.e.\ preserving a singular foliation by level sets of $\func$.

In a series of papers by the author, for the case when $\Mman$ is a compact surface and $\Pman$ is a one-dimensional manifold, i.e.\ $\Pman=\bR$ or $\Circle$, there were studied a ``large'' subset $\mathcal{F}(\Mman,\Pman) \subset \Ci{\Mman}{\Pman}$ of maps $\func\colon\Mman\to\Pman$ with isolated critical points (which includes all Morse maps).
It was shown that for those $\func\in\mathcal{F}(\Mman,\Pman)$ the path components of the stabilizer $\Stabilizer{\func}$ are either contractible or homotopy equivalent to the circle (the latter happens only for few types of maps), see~\cite{Maksymenko:TA:2003,Maksymenko:AGAG:2006,Maksymenko:UMZ:ENG:2012,Maksymenko:TA:2020} for details, references, and applications to the homotopy types of the corresponding orbits of such functions, and~\cite{LeygonieBeers:JACT:2023} for applications to persistent homologies.
However, the technique of those papers is essentially based on the fact that the leaves of those singular foliations are one-dimensional: this allowed to regard them as orbits of some vector fields (at least for oriented $\Mman$).

In a recent series of joint papers by the author with O.~Khokhliuk~\cite{KhokhliukMaksymenko:IndM:2020,KhokhliukMaksymenko:PIGC:2020, KhokhliukMaksymenko:2022, KhokhliukMaksymenko:JHRS:2023} there were developed several techniques for computations of homotopy types of diffeomorphism groups of Morse-Bott and more general classes of ``singular'' foliations in higher dimensions.
In particular, the previous paper \cite{KhokhliukMaksymenko:JHRS:2023} computes the homotopy types of diffeomorphism groups of leaf preserving diffeomorphisms for mentioned above foliations of the solid torus and lens spaces.
In the present paper we find the relationship between those groups and the groups of diffeomorphisms sending leaves to leaves, see Theorems~\ref{th:Dlp_Dfol_he}, \ref{th:DiffLpq_fol_homtype_a}, \ref{th:hom_type_solid_torus_groups}, \ref{th:DiffLpq_fol_homtype} below.

\subsection*{Some definitions and notations}
Let $\AFoliation$ be a partition of a set $\Mman$.
Then a map $\dif\colon\Mman\to\Mman$ is \term{$\AFoliation$-foliated} if for each leaf $\omega\in\AFoliation$ its image $\dif(\omega)$ is a (possibly distinct from $\omega$) leaf of $\AFoliation$ as well.
Also, $\dif$ is \term{$\AFoliation$-leaf preserving}, if $\dif(\omega)=\omega$ for each leaf $\omega\in\AFoliation$.
More generally, if $\BFoliation$ is a partition of a set $\Nman$, then a map $\dif\colon\Mman\to\Nman$ is \term{$(\AFoliation,\BFoliation)$-foliated}, if for each leaf $\omega\in\AFoliation$ its image $\dif(\omega)$ is a leaf of $\BFoliation$.

All manifolds and their diffeomorphisms are assumed to be of class $\Cinfty$.
If $\Mman$ is a manifold, and $\Xman\subset\Mman$ is a subset, then we will denote by $\Diff^{fol}(\AFoliation,\Xman)$, (resp.\ $\Diff^{lp}(\AFoliation,\Xman)$), the groups of $\AFoliation$-foliated, (resp., $\AFoliation$-leaf preserving), $\Cinfty$ diffeomorphisms of $\Mman$ endowed with the corresponding strong $\Cinfty$ Whitney topologies.
If $\Xman=\varnothing$, then we will omit it from notation and denote the above groups by $\Diff^{fol}(\AFoliation)$ and $\Diff^{lp}(\AFoliation)$ respectively.

Throughout the paper $D^2 = \{ \nrm{\az}\leq 1\}$ is the unit disk in the complex plane and $\Circle = \partial D^2$ the unit circle.
For an abelian group $A$ we will denote by $\Dih(A)$ the \term{dihedral extension of $A$}, i.e.\ the semidirect product $A\rtimes\bZ_2$ corresponding to the natural action of $\bZ_2$ on $A$ generated by the isomorphism $o:A\to A$, $o(a)=-a$.
For example, $\Dih(\bZ_n)$ is the dihedral group $\mathbb{D}_n$, and $\Dih(\SO(2))=\Ort(2)$.

\section{Main results}\label{sect:main_result:all}
In this section we will formulate two principal results (Theorems~\ref{th:Dlp_Dfol_he} and~\ref{th:DiffLpq_fol_homtype_a}) concerning ``simplest'' Morse-Bott foliations on a solid torus and lens spaces.
More detailed statements will be presented throughout the paper.

\subsection*{Solid torus}
Let $\ATor = \Circle\times D^2$ be the solid torus.
Consider the following Morse-Bott function $\func\colon\ATor\to\bR$, $\func(\al,\az) = \nrm{\az}^2$, and let $\AFoliation = \{\func^{-1}(t) \mid t\in\UInt \}$ be the partition of $\ATor$ into the level sets of $\func$.
Then $\func^{-1}(0) = \Circle\times 0$ is the ``central circle'' of $\ATor$ consisting of all critical points of $\func$, and thus being a non-degenerate critical submanifold.
For all other values $t\in(0;1]$, $\func^{-1}(t) = \Circle\times\{\nrm{\az}^2=t\}$ is a $2$-torus (product of two circles) ``parallel'' to $\partial\ATor$.
In particular, $\func^{-1}(1) = \partial\ATor$.

N.~Ivanov~\cite{Ivanov:ST:1982} proved that the group $\DiffdATor$ of all diffeomorphism of $\ATor$ fixed on the boundary is contractible.
Also in a recent joint paper~\cite{KhokhliukMaksymenko:JHRS:2023} of the author with O.~Khokhliuk it is computed the weak homotopy type of $\DiffAFolLp$ and shown that all homotopy groups of $\FolLpDiffdATor$ vanish.
One of the main results of this paper accomplishes the results of~\cite{KhokhliukMaksymenko:JHRS:2023} with the following theorem:
\begin{theorem}\label{th:Dlp_Dfol_he}
The pair of groups $\bigl(\DiffAFolLp, \ \FolLpDiffdATor\bigr)$ is a strong deformation retract of the pair $\bigl(\DiffAFol, \ \FolDiffdATor\bigr)$.
\end{theorem}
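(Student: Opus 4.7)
The plan is to construct an explicit continuous section of the natural projection from $\DiffAFol$ to the diffeomorphism group of the leaf space $\UInt$, and then contract along the straight-line homotopy in that group to turn any foliated diffeomorphism into a leaf-preserving one.

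First I would set up the leaf-space projection. Every $\dif\in\DiffAFol$ permutes the level sets of $\func(\al,\az)=|\az|^2$, so there is a unique map $\bar\dif:\UInt\to\UInt$ with $\func\circ\dif=\bar\dif\circ\func$. The function $|\dif_2(\al,\az)|^2$ is a smooth $\AFoliation$-invariant function and every such function factors smoothly through $\func$ (by a standard Hadamard-type evenness argument for the radial variable $|\az|^2$), so $\bar\dif$ is $\Cinfty$; the same argument applied to $\dif^{-1}$ shows $\bar\dif\in G:=\Diff(\UInt,\partial\UInt)$. Thus $p:\DiffAFol\to G$, $p(\dif)=\bar\dif$, is a continuous homomorphism with $\ker p=\DiffAFolLp$.

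Next I would build a rotation-invariant section $s:G\to\DiffAFol$ by
\[
s(\psi)(\al,\az):=\bigl(\al,\sqrt{\psi(|\az|^2)/|\az|^2}\,\az\bigr),\qquad s(\psi)(\al,0):=(\al,0).
\]
Since $\psi\in G$ satisfies $\psi(0)=0$ and $\psi'(0)>0$, Hadamard's lemma produces a smooth strictly positive extension of $u\mapsto\psi(u)/u$ to all of $\UInt$; hence $\sqrt{\psi(|\az|^2)/|\az|^2}$ is a smooth positive function of $\az\in D^2$ and $s(\psi)\in\DiffAFol$ with $p(s(\psi))=\psi$. Setting
\[
r(\dif):=s(\bar\dif)^{-1}\circ\dif
\]
yields a retraction onto $\DiffAFolLp$ (and $r(\dif)=\dif$ whenever $\bar\dif=\id$). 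The straight-line homotopy $\sigma_\tau(\psi)(t):=(1-\tau)t+\tau\psi(t)$ remains in $G$ because $\sigma_\tau(\psi)'=(1-\tau)+\tau\psi'>0$, and
\[
H_\tau(\dif):=s(\sigma_\tau(\bar\dif))^{-1}\circ\dif
\]
then satisfies $H_0=\id$, $H_1=r$, and $H_\tau|_{\DiffAFolLp}=\id$, giving the desired strong deformation retraction $\DiffAFol\searrow\DiffAFolLp$.

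Finally I would verify compatibility with the boundary-fixing subpair and identify the main obstacle. If $\dif|_{\partial\ATor}=\id$ then $\bar\dif(1)=1$, so $\sigma_\tau(\bar\dif)(1)=1$ and the formula for $s$ at $|\az|=1$ reduces to the identity; hence $H_\tau(\dif)|_{\partial\ATor}=\id$ for all $\tau$, and $H_\tau$ restricts to a strong deformation retraction of $\FolDiffdATor$ onto $\FolLpDiffdATor$. The main technical hurdle is ensuring smoothness of $s(\psi)$ at the singular circle $\Circle\times 0$ together with continuity of $s$ and $p$ in the strong $\Cinfty$ Whitney topology; both reduce to the fact that the smooth positive square root of $\psi(u)/u$ can be chosen to depend continuously on $\psi\in G$. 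Once this is secured, the rest of the argument is a formal check.
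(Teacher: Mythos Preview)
Your proposal is correct and follows essentially the same strategy as the paper: your section $s$ is exactly the paper's homomorphism $\smprA$ (Lemma~5.7, specialized to $k=2$), your projection $p$ is the paper's $\smprM$ (Lemma~5.8, whose proof is precisely the Whitney even-function argument you allude to), and the deformation via the straight-line homotopy on $\DiffPlI$ is the content of Lemma~4.1 combined with Theorem~5.10. The paper carries this out more generally for definite $2$-homogeneous functions on arbitrary vector bundles, but in the solid-torus case the formulas and the logical skeleton coincide with yours.
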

The proof identifies the groups $\DiffAFol$ and $\DiffAFolLp$ with the stabilizers of $\func$ with respect to the natural ``left-right'' and ``right'' actions of diffeomorphism groups of $\ATor$ and $\UInt$ on $\Ci{\ATor}{\UInt}$, see Section~\ref{sect:left-right-actions}, and then uses the author's result from~\cite{Maksymenko:BSM:2006} claiming that those stabilizers are homotopy equivalent.
The above identification exploits the well-known Whithey theorem on even smooth functions, see Lemma~\ref{lm:Whitney_even_func}.
In fact, it will be established a more general statement concerning smooth fiber-wise definite homogeneous functions of degree $2$ on total spaces of vector bundles, see Theorem~\ref{th:def_2_hom_func}.

Notice that the mapping class group $\pi_0\Diff(\ATor)$ can be identified with some subgroup $\mathcal{T}$ of $\GL(2,\bZ)$, and the natural projection $\Diff(\ATor) \to \pi_0\Diff(\ATor) \cong \mathcal{T}$ admits a section homomorphism $s\colon\mathcal{T} \to \Diff(\ATor)$.
As a consequence of results from~\cite{Ivanov:ST:1982, KhokhliukMaksymenko:JHRS:2023}, we obtain, see Theorem~\ref{th:hom_type_solid_torus_groups}, that the following inclusions are weak homotopy equivalences:
\begin{gather*}
    \mathcal{T} \ \xmonoArrow{s} \ \DiffAFolLp     \ \subset \ \DiffAFol     \ \subset \ \Diff(\ATor), \\
    \{\id\}      \ \subset \ \FolLpDiffdATor \ \subset \ \FolDiffdATor \ \subset \ \Diff(\ATor,\partial\ATor).
\end{gather*}
In particular, the latter groups are weakly contractible.
As an application, we see that every principal $\FolLpDiffdATor$- or $\FolDiffdATor$-fibration over any CW-complex has a section, and therefore it is trivial.
Moreover, the structure group of every principal $\DiffAFolLp$- or $\DiffAFol$-fibration reduces to the discrete group $\mathcal{T}$, so the corresponding $\mathcal{T}$-bundle is a covering map.

\subsection*{Lens spaces}
Recall that any $3$-manifold $\Lman_{\gxi}$ obtained by gluing two copies of the solid torus $\ATor$ by some diffeomorphism of their boundaries $\gxi\colon\partial\ATor\to\partial\ATor$ is called a \term{lens space}, see e.g.~\cite{Reidemeister:AMSUH:1935, Brody:AM:1960, Bonahon:Top:1983, Gadgil:GT:2001, HongKalliongisMcCulloughRubinstein:LMN:2012} for basic properties of lens spaces.

More precisely, a lens space can be defined as follows.
Let $\Lman:=\ATor\times\{0,1\}$ be two copies of $\ATor$.
Identify each point $(\px,0)$ with $(\gxi(\px),1)$ and denote the obtained quotient space by $\Lman_{\gxi}$.
Let also $\vbp\colon\Lman \to \Lman_{\gxi}$ be the corresponding quotient map, $\ATor_i := \vbp(\ATor\times\{i\})$, $i=0,1$, and $\Cman_i := \Circle\times 0 \times i$ be the ``central circle'' of the torus $\ATor_i$.

As just noted above, $\ATor$ has a Morse-Bott foliation $\AFoliation$ into the central circle and $2$-tori ``parallel'' to the boundary.
In particular, $\partial\ATor$ is a leaf of $\AFoliation$.
Since $\gxi$ identifies $\partial\ATor\times\{0\}$ with $\partial\ATor\times\{1\}$, it induces the foliation $\AFoliation_{\gxi}$ on $\Lman_{\gxi}$ whose leaves are the images of the corresponding leaves of foliations on those tori.
In particular, $\AFoliation_{\gxi}$ has two singular leaves (the central circles) $\Cman_0$ and $\Cman_1$ and all other leaves are $2$-tori parallel each other.
We will call this foliation \term{polar}.
It appears very often in studying lens spaces, due to the effect observed by F.~Bonahon~\cite{Bonahon:Top:1983} that each of those tori is preserved up to isotopy by diffeomorphisms of $\Lman_{\gxi}$.

One can also define a Morse-Bott function $\gfunc\colon \Lman_{\gxi}\to\bR$ such that $\AFoliation_{\gxi}$ coincides with the partition of $\Lman_{\gxi}$ into the level sets of $\gfunc$.
For example, define the function $\widehat{\gfunc}:\Lman\to[0;2]$ by
\[
    \widehat{\gfunc}(\al,\az) =
    \begin{cases}
        \nrm{\az}^2,   & (\al,\az)\in\ATor\times\{0\},\\
        2-\nrm{\az}^2, & (\al,\az)\in\ATor\times\{1\}.
    \end{cases}
\]
Then $\widehat{\gfunc}$ induces a Morse-Bott $\Cinfty$ function $\gfunc\colon\Lman_{\gxi}\to[0;2]$ such that $\widehat{\gfunc} = \gfunc\circ\vbp$ and the set of critical points of $\gfunc$ is a union of two circles $\gfunc^{-1}(0)=\Cman_0$ and $\gfunc^{-1}(1)=\Cman_1$.
Evidently, $\ATor_0=\gfunc^{-1}\bigl([0;1]\bigr)$, $\ATor_1=\gfunc^{-1}\bigl([1;2]\bigr)$, so $\partial\ATor_0=\partial\ATor_1=\gfunc^{-1}(1)$.

Let $\FolDiffxi{\gxi}$ be the group of $\AFoliation_{\gxi}$-foliated diffeomorphisms of $\Lman_{\gxi}$, $\FolLpDiffxi{\gxi}$ be its (normal) subgroup consisting of $\AFoliation_{\gxi}$-leaf preserving ones, and
\[
    \FolDiffPlxi{\gxi} = \{  \dif\in\FolLpDiffxi{\gxi} \mid \dif(\Cman_i)=\Cman_i, i=0,1 \}
\]
be the another normal subgroup of $\FolDiffxi{\gxi}$ leaving invariant each central circle $\Cman_i$.
Then we have the following inclusions:
\begin{equation}\label{equ:diag:Lpq_all_groups:line}
    \FolLpDiffxi{\gxi}   \ \subset \
    \FolDiffPlxi{\gxi}   \ \subset \
    \FolDiffxi{\gxi}.
\end{equation}
Evidently, if $\FolDiffPlxi{\gxi}$ does not coincide with all the group $\FolDiffxi{\gxi}$, then $\FolDiffPlxi{\gxi}$ has index $2$ in $\FolDiffxi{\gxi}$.
Our second result is the following:
\begin{theorem}\label{th:DiffLpq_fol_homtype_a}
The group $\FolLpDiffxi{\gxi}$ is a strong deformation retract of $\FolDiffPlxi{\gxi}$.
\end{theorem}
In fact, we will prove it for a more general class of foliations by level sets of Morse-Bott functions on a closed manifold having only extreme critical submanifolds, see Theorem~\ref{th:DFLlp_DPlFLfol}.
Moreover, using previous results we will also deduce homotopy types of all the groups in~\eqref{equ:diag:Lpq_all_groups:line}, see Theorem~\ref{th:DiffLpq_fol_homtype}.

\subsection*{Structure of the paper}
Section~\ref{sect:prelim} contains two technical statements about sections of epimorphisms of topological groups and famous Whitney theorem on even functions, see Lemma~\ref{lm:Whitney_even_func}.
Section~\ref{sect:left-right-actions} collects preliminary results about left-right actions of groups of diffeomorphisms.
Section~\ref{sect:fib_hom_func_vb} is devoted to properties of fiberwise homogeneous functions on the total spaces of vector bundles.
In particular, in Section~\ref{sect:fib_hom_func_vb} we prove Theorem~\ref{th:def_2_hom_func} including Theorem~\ref{th:Dlp_Dfol_he} as a particular case.
The proof uses a mentioned above Whitney theorem.
Further in Section~\ref{sect:proof:th:DiffLpq_fol_homtype} we consider high-dimensional analogues of lens spaces obtained by gluing unit disk bundles over some manifolds by some diffeomorphism of their boundaries (assuming that such a diffeomorphism exists).
We prove there Theorem~\ref{th:DFLlp_DPlFLfol} including Theorem~\ref{th:DiffLpq_fol_homtype_a} as a particular case.

In Section~\ref{sect:main_result:solid_torus} we describe the homotopy types of the groups from Theorem~\ref{th:Dlp_Dfol_he}, see Theorem~\ref{th:hom_type_solid_torus_groups}, and in Section~\ref{sect:lens_spaces} describe the homotopy types of the groups in~\eqref{equ:diag:Lpq_all_groups:line} for all lens spaces, see Theorem~\ref{th:DiffLpq_fol_homtype}.

\section{Preliminaries}\label{sect:prelim}
\subsection*{Morphisms of topological groups and monoids}
Let $\sincl\colon\Aman\to\Xman$ be a topological inclusion, i.e.\ a homeomorphism of $\Aman$ onto some subset of $\Xman$.
It will be convenient to say that $\Aman$ is a \term{(strong) deformation retract of $\Xman$ with respect to $\sincl$}, if so is the image $\sincl(\Aman)$.
We will need the following simple lemmas.
\begin{lemma}\label{lm:sect_of_homo}
Let $1 \to \gA \xmonoArrow{\alpha} \gB \xepiArrow{~\prC~} \gC \to 1$ be a short exact sequence of continuous homomorphisms of topological groups in which $\alpha$ is a topological embedding.
Suppose there exists a continuous homomorphism $\smprA\colon\gC\to\gB$ being a section of $\prC$, i.e.\ $\prC\circ\smprA = \id_{\gC}$.
\begin{enumerate}[wide]
\item
Then the map $\zeta\colon\gB \to \gA\times\gC$, $\zeta(b)=\bigl(\smprA(\prC(b^{-1}))\cdot b, \ \prC(b) \bigr)$, is a \term{homeomorphism}.

\item
Suppose that there exists a strong deformation retraction of $\gC$ into the unit $e_{\gC}$ of $\gC$.
Then $\gA$ is a strong deformation retract of $\gB$ with respect to the inclusion $\alpha$.
Suppose, in addition, that $\smprA(\gC)$ is contained in some subgroup $\gB'\subset \gB$, and denote $\gA' = \alpha^{-1}(\gA\cap\gB')$.
Then the pair $(\gA, \gA\cap\gB')$ is a strong deformation retract of $(\gB,\gB')$ via the inclusion $\alpha$.
\end{enumerate}
\end{lemma}
\begin{proof}
The first statement is trivial.
Suppose $H\colon\gC\times\UInt\to\gC$ is a strong deformation retraction of $\gC$ into $e_{\gC}$, i.e.\ $H_0=\id_{\gC}$, $H_t(e_{\gC})=e_{\gC}$ for $t\in\UInt$, and $H_1(\gC) = \{e_{\gC}\}$.
Then the map
\begin{equation}\label{equ:deform_B_to_A}
    G\colon \gB\times\UInt\to\gB,
    \qquad
    G(b,t) = \smprA\bigl(H(\prC(b^{-1}),1-t))\bigr)\cdot b,
\end{equation}
is a strong deformation retraction of $\gB$ onto $\gA$, so $G_0=\id_{\gB}$, $G_t$ is fixed on $\gA$ for $t\in\UInt$, and $G_1(\gB) \subset \gA$.

Moreover, if $\smprA(\gC) \subset \gB'$, then~\eqref{equ:deform_B_to_A} shows that if $b\in\gB'$, then $G(b,t)\in\gB'$ as well.
In other words, $\gB'$ is invariant under $G$, and thus $G$ induces a strong deformation retraction of $\gB'$ onto $\sincl(\gA\cap\gB')$.
\end{proof}

\begin{lemma}\label{lm:monoids_homo}
Let $\smprM\colon\gA\to\gB$ be a homomorphism of monoids, so $\smprM(e_{\gA})=e_{\gB}$ and $\smprM(aa')=\smprM(a)\smprM(a')$ for all $a,a'\in\gA$.
Then for every invertible $a\in\gA$, its image $\smprM(a)$ is invertible in $\gB$.
\end{lemma}
\begin{proof}
We have that $e_{\gB} = \smprM(e_{\gA}) = \smprM(a a^{-1}) = \smprM(a)\smprM(a^{-1})$.
Therefore, $(\smprM(a))^{-1}=\smprM(a^{-1})$.
\end{proof}

\subsection*{Smooth even functions}
We will need the following statement.
\begin{lemma}[\rm H.~Whitney, \cite{Whitney:DJM_2:1943}]
\label{lm:Whitney_even_func}
Let $a>0$, $I_{a} = [-a;a]$, and $\gamma\in\Ci{I_a}{\bR}$ be an even function, that is $\gamma(-t)=\gamma(t)$ for all $t\in I_a$.
Then there exists a unique $\phi\in\Ci{[0;a]}{\bR}$ such that $\gamma(t) = \phi(t^2)$ for all $t\in\bR$.

Moreover, let $\CinftyEeven(I_a,\bR)$ be the space of even $\Cinfty$ functions on $I_a$.
Then the correspondence $\gamma\to\phi$ is an $\bR$-linear map $\delta:\CinftyEeven(I_a,\bR)\to\Ci{[0;a]}{\bR}$ being continuous between the $\Cinfty$ topologies.
\end{lemma}
\begin{proof}[Sketch of proof.]
Notice that $\phi\colon[0;a]\to\bR$ is uniquely defined by $\phi(t) = \gamma(\sqrt{\nrm{\az}})$, $t\in[0;a]$.
Such formula implies that $\phi$ is $\Cinfty$ only for $t\not=0$ and the main difficulty was to show that $\phi$ is in fact $\Cinfty$ near $0$ as well.
Smoothness of $\phi$ is proved by Whitney, and we need to derive the second statement about continuity of $\delta$.

Uniqueness of $\phi$ easily implies that $\delta$ is an $\bR$-linear map.
Hence, it suffices to verify continuity of $\delta$ at the zero function $0$ only.
One easily checks by induction that the identity $\gamma(t)=\phi(t^2)$ implies that for every for $r\geq1$ there exists some constant $A_r>0$ depending only on $r$ such that:
\[
    \sup\limits_{t\in[0;a]}
        \bigl\vert\tfrac{d^r\!\phi}{dt^i}(t)\bigr\vert
            \leq
        A_r \sum_{i=0}^{r+1} \sup\limits_{t\in I_a}
            \bigl\vert\tfrac{d^i\gamma}{dt^i}(t)\bigr\vert,
\]
This implies, that for each $r\geq0$ the map $\delta$ is continuous from $\Cr{r+1}$ topology of $\CinftyEeven(I_a,\bR)$ into $\Cr{r}$ topology of $\Ci{[0;a]}{\bR}$.
Hence, it is continuous between $\Cinfty$ topologies.

For example, notice that $\gamma'(t) = 2t \phi'(t^2)$.
Hence, $\gamma'(0)=0$, and thus, by Hadamard lemma, $\gamma'(t) = t \delta(t)$, where $\delta(t) = \smallint\limits_{0}^{1}\gamma''(st) ds$.
Therefore,
\[ \phi'(t^2) = \tfrac{1}{2}\delta(t) = \tfrac{1}{2}\smallint\limits_{0}^{1}\gamma''(st) ds,\]
and
\[
    \sup\limits_{t\in[0;a]} \nrm{\phi'(t)} \leq \tfrac{1}{2} \sup\limits_{t\in I_a} \nrm{\gamma''(t)}.
\]
We leave the other cases $r\geq2$ for the reader.
\end{proof}

\section{Stabilizers of functions under actions of diffeomorphism groups}
\label{sect:left-right-actions}

\subsection*{Left-right actions of diffeomorphism groups}
Let $\Mman$ be a smooth compact manifold.
Then the product $\DiffRM$ of groups of diffeomorphisms naturally acts from the \term{left} on the space of smooth functions $\CiMR$ by the following action map, see e.g.~\cite[Section~3]{MondNunoBallesteros:Sing:2020} for detailed discussions and references:
\begin{align*}
&\mu:\DiffRM \times \CiMR \to \CiMR, &
&\mu(\rphi,\dif,\func) = \rphi\circ\func\circ\dif^{-1}.
\end{align*}
It is usually referred as \term{left-right}.
Notice also that $\DiffM = \id_{\bR} \times \DiffM$ is a subgroup of $\DiffRM$, and thus we have an induced (still \term{left}) action
\begin{align*}
&\mu:\DiffM \times \CiMR \to \CiMR, &
&\mu(\dif,\func) = \func\circ\dif^{-1},
\end{align*}
which will be referred below as \term{right}%
\footnote{In fact, it will become a right action if we define it by $\mu(\dif,\func) = \func\circ\dif$.
However, it will be convenient to use the terms ``left-right'' and ``right'' to refer the sides at which we apply the corresponding diffeomorphisms to $\func$.}.
In terms of ``arrows'' these actions ``move down'' the horizontal arrow $f$:
\begin{align*}
&\xymatrix@R=1.5em{
  \Mman \ar[rr]^-{\func} \ar[d]_-{\dif}          && \bR \ar[d]^-{\rphi}    \\
  \Mman \ar[rr]^-{\rphi\circ\func\circ\dif^{-1}} && \bR
}&
&\xymatrix@R=1.5em{
  \Mman \ar[rr]^-{\func} \ar[d]_-{\dif} && \bR     \\
  \Mman \ar[rru]_-{\func\circ\dif^{-1}}
}
\end{align*}

Let $\func\in\CiMR$ and $\Xman\subset\Mman$ be a subset.
Denote by $\Diff(\Mman,\Xman)$ the subgroup of $\DiffM$ consisting of diffeomorphisms equal to the identity on $\Xman$.
Then one can define the stabilizers of $\func$ with respect to the above left-right and right actions:
\begin{align*}
    \StabLR{\func,\Xman} &:= \{(\rphi,\dif)\in\DiffR\times\DiffMX \mid  \rphi\circ\func\circ\dif^{-1} = \func \}, \\
    \StabR{\func,\Xman}  &:= \{\dif\in\DiffMX                     \mid  \func\circ\dif^{-1} = \func \}.
\end{align*}
If $\Xman$ is empty, then we will omit it from the notation.

Notice also that we have a canonical inclusion $\sincl\colon\StabR{\func,\Xman} \subset \StabLR{\func,\Xman}$, $\sincl(\dif) = (\id_{\bR},\dif)$, and therefore sometimes we will identify $\StabR{\func,\Xman}$ with its image $\{\id_{\bR}\} \times \StabR{\func,\Xman}$.

\begin{example}\rm
Define $\rphi,\func,\dif,q:\bR\to\bR$ by $\rphi(t)=4t$, $f(x) = x^2$, $\dif(x)=2x$, $q(x) = -x$.
Then the identities $4x^2 = (2x)^2$ and $(-x)^2=x^2$, mean respectively that $\rphi\circ\func = \func \circ \dif$ and $\func\circ q = \func$, so $(\rphi,\dif)\in\SMRf$ and $q\in\SMf$.
\end{example}

We will describe now the geometrical meaning of the above right- and left-right stabilizers in terms of level sets of $\func$.
Let $\AFoliation = \{\func^{-1}(t)\}_{t\in\bR}$ be the partition of $\Mman$ into the level sets of $\func$,  $\rphi\in\DiffR$, and $\dif\in\DiffM$.
Then it is easy to see that the following conditions are equivalent:
\begin{enumerate}[label={(R\arabic*)}, leftmargin=10ex]
    \item\label{enum:R1} $\func\circ\dif^{-1} = \func$, so $\dif\in\SMf$;
    \item\label{enum:R2} $\func = \func\circ\dif$;
    \item\label{enum:R3} $\dif(\func^{-1}(t))=\func^{-1}(t)$ for every $t\in\bR$, i.e.\ $\dif$ is an $\AFoliation$-leaf preserving diffeomorphism.
\end{enumerate}
Similarly, the following conditions are also equivalent:
\begin{enumerate}[label={(LR\arabic*)}, leftmargin=10ex]
    \item\label{enum:LR1} $\rphi\circ\func\circ\dif^{-1} = \func$, so $\dif\in\SMRf$;
    \item\label{enum:LR2} $\rphi\circ\func = \func\circ\dif$;
    \item\label{enum:LR3} $\dif(\func^{-1}(t))=\func^{-1}(\rphi(t))$ for all $t\in\bR$.
\end{enumerate}
In particular, condition~\ref{enum:LR3} implies that $\dif$ is an $\AFoliation$-foliated diffeomorphism.
However, if $\dif'$ is an $\AFoliation$-foliated diffeomorphism, then a priori we can not claim that there exists $\rphi\in\DiffR$ such that $(\rphi,\dif')\in\SMRf$.

Let us clarify the relations between stabilizers and foliated diffeomorphisms.
\begin{lemma}\label{lm:SMRf_image}
Let $\func\in\Ci{\Mman}{\bR}$ and $\Aman:= \func(\Mman) \subset \bR$ be its image.
Then for every $(\rphi,\dif)\in\SMRf$, we have that $\rphi(\Aman)=\Aman$.
Moreover, if $\psi\in\DiffR$ is another diffeomorphism such that $\psi = \rphi$ on $\Aman$, then $(\psi',\dif)\in\SMRf$.
\end{lemma}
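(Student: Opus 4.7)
My plan is to unpack the defining relation $\rphi\circ\func\circ\dif^{-1} = \func$, or equivalently $\rphi\circ\func = \func\circ\dif$, and push elements around both ways.

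For the first claim, I would take an arbitrary $a\in\Aman$, write $a=\func(\px)$ for some $\px\in\Mman$, and compute
\[
\rphi(a) = \rphi\bigl(\func(\px)\bigr) = \func\bigl(\dif(\px)\bigr) \in \Aman,
\]
which proves $\rphi(\Aman)\subset\Aman$. To get the reverse inclusion, I would observe that the left-right stabilizer condition is invariant under taking inverses: starting from $\rphi\circ\func = \func\circ\dif$, compose with $\rphi^{-1}$ on the left and $\dif^{-1}$ on the right to get $\func\circ\dif^{-1} = \rphi^{-1}\circ\func$, i.e.\ $(\rphi^{-1},\dif^{-1})\in\SMRf$ as well. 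Applying the same computation to this pair yields $\rphi^{-1}(\Aman)\subset\Aman$, and hence $\Aman\subset\rphi(\Aman)$. Combining gives $\rphi(\Aman)=\Aman$.

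For the second claim (where $\psi'$ should read $\psi$), I would note that since $\dif$ is a diffeomorphism of $\Mman$, we have $\func\circ\dif^{-1}(\Mman) = \func(\Mman) = \Aman$. Therefore, for every $\py\in\Mman$, the value $\func(\dif^{-1}(\py))$ lies in $\Aman$, where $\psi$ and $\rphi$ agree by hypothesis. Consequently
\[
\psi\circ\func\circ\dif^{-1}(\py) = \rphi\circ\func\circ\dif^{-1}(\py) = \func(\py),
\]
so $(\psi,\dif)\in\SMRf$.

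No step here is genuinely difficult; the whole argument is a direct unwinding of definitions. The only mild care needed is the passage to $(\rphi^{-1},\dif^{-1})$ to obtain surjectivity of $\rphi|_{\Aman}$, which is why I would formalize that symmetry explicitly rather than appeal to it informally.
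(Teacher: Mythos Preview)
Your proof is correct and follows essentially the same approach as the paper: both obtain $\rphi(\Aman)\subset\Aman$ by direct computation, then pass to the inverse pair $(\rphi^{-1},\dif^{-1})\in\SMRf$ for the reverse inclusion, and both verify the second claim by noting that $\func$ (equivalently $\func\circ\dif^{-1}$) takes values only in $\Aman$, where $\psi$ and $\rphi$ agree. You also correctly spotted the typo $\psi'$ for $\psi$.
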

\begin{proof}
Let $a\in \Aman$, so $a=\func(\px)$ for some $\px\in\Mman$.
Then $\rphi(a) = \rphi\circ\func(\px) = \func\circ\dif(\px) \in \Aman$, so $\rphi(\Aman)\subset \Aman$.
Applying the same arguments to the inverse $(\rphi^{-1},\dif^{-1})=(\rphi,\dif)^{-1}\in\SMRf$, we obtain that $\rphi^{-1}(\Aman)\subset \Aman$, whence $\rphi(\Aman)=\Aman$.
Moreover, if $\psi = \rphi$ on $\Aman$, then for each $\px\in\Mman$ we have that $\psi\circ\func(\px)=\rphi\circ\func(\px)=\func\circ\dif(\px)$.
Thus, $(\psi,\dif)\in\SMRf$ as well.
\end{proof}

Let $\func\in\Ci{\Mman}{\bR}$ and $\Xman \subset \Mman$ be a subset.
Then its image $\Aman:= \func(\Mman) \subset \bR$ is a finite union of closed intervals and points, and therefore a submanifold of $\bR$.
Let $\DiffA$ be the group of diffeomorphisms of $\Aman$.
In view of Lemma~\ref{lm:SMRf_image}, it is more natural to regard $\func$ as a surjective function $\func\in\Ci{\Mman}{\Aman}$, and study the stabilizer of $\func$ with respect to the corresponding ``left-right'' action of $\DiffAM$.
Therefore, it is more convenient to consider instead of $\SMRf$ the following groups:
\begin{align*}
\StabLRImgPl{\func,\Xman} &:= \{ (\rphi,\dif) \in \DiffPlA \times \DiffMX \mid \rphi\circ\func=\func\circ\dif \}, \\
\StabLRImg{\func,\Xman}  &:= \{ (\rphi,\dif) \in \DiffA  \times \DiffMX \mid \rphi\circ\func=\func\circ\dif \}.
\end{align*}
Then $\SMf \equiv \id_{\Aman}\times\SMf$ is a subgroup of $\SPlMAf \subset \SMAf$, and the properties~\ref{enum:LR1}-\ref{enum:LR3} still hold for $\SMAf$ instead of $\SMRf$.
Moreover, we have the following lemma.
\begin{lemma}\label{lm:DAM_DM}
Let $\prA\colon\DiffAM\to\DiffA$ and $\prM\colon\DiffAM\to\DiffM$ be natural projections, so $\prA(\rphi,\dif) = \rphi$ and $\prM(\rphi,\dif) = \dif$.
Then the following statements hold.
\begin{enumerate}
\item\label{enum:SMAf:homo} $\prA$ and $\prM$ are homomorphisms;
\item\label{enum:SMAf:Smf_DTlp} $\SMf = \DiffAFolLp$;
\item\label{enum:SMAf:ker_prA} $\ker(\restr{\prA}{\SMAf}) = \id_{\Aman} \times \SMf$;
\item\label{enum:SMAf:img_prM} $\prM(\SMAf) \subset \DiffAFol$;
\item\label{enum:SMAf:ker_prM} if $(\rphi,\dif),(\rphi',\dif)\in\SMRf$, then $\rphi=\rphi'$, i.e.\ the map $\restr{\prM}{\SMAf}\colon \SMAf \to \DiffM$ is injective.
\end{enumerate}
\end{lemma}
\begin{proof}
Statements~\ref{enum:SMAf:homo} and~\ref{enum:SMAf:ker_prA} are trivial, \ref{enum:SMAf:Smf_DTlp} coincides with~\ref{enum:R3}, while~\ref{enum:SMAf:img_prM} coincides with~\ref{enum:LR3}.

\ref{enum:SMAf:ker_prM}
Notice that the kernel of $\restr{\prM}{\SMAf}$ consists of pairs of the form $(\rphi,\id_{\Mman})\in\SMAf$ satisfying $\rphi\circ\func = \func$, whence $\rphi\in\DiffA$ is fixed on the image $\Aman$ of $\func$, so $\rphi =\id_{\Aman}$.
\end{proof}
In particular, we get the following commutative diagram in which the upper row is exact at the two middle items (though $\prM$ is not necessarily surjective):
\begin{equation}\label{equ:diag_rel_Stabs}
\begin{gathered}
\xymatrix@R=1.5em{
1 \ar[r] &   \SMf \ar[rr]^-{\sincl\colon \dif\,\mapsto\,(\id_{\Aman},\dif)}  &&
    \SMAf \ar[r]^-{\prA}  \ar@{^(->}[d]^-{\prM} & \DiffA \\
         &   \DiffAFolLp   \ar@{=}[u] \ar@{^(->}[rr]                      &&
    \DiffAFol
}
\end{gathered}
\end{equation}

\subsection*{Condition $\JProp$}
In~\cite{Maksymenko:BSM:2006} the author gave wide conditions on $\func$ under which the inclusion $\sincl\colon\SMf \subset \SPlMAf$ is a homotopy equivalence, see Theorem~\ref{th:SMf_SMRf_he} below.
We will briefly recall that result.

\begin{definition}\label{def:propJ}
Let $\func\colon\bR^n\to\bR$ be a $\Cinfty$ function.
Say that $\func$ \term{has property \JProp\ at $\pu\in\bR^{n}$} if there exists a neighborhood $\Uman$ of $\pu$ and $\Cinfty$ functions $\alpha_1,\ldots,\alpha_n\colon\Uman\to\bR$ such that
$\func(\px) - \func(\pu) = \sum\limits_{i=1}^{n} \func'_{\pxi{i}}(\px) \, \alpha_i(\px)$, $\px\in\Uman$.
\end{definition}

\begin{example}\rm
Let $\func\colon\bR^{n}\to\bR$ be a $\Cinfty$ homogeneous function of order $k$, that is $\func(t\px)=t^{k}\func(\px)$ for all $t\geq0$ and $\px\in\bR^{n}$.
Then, by the well known Euler identity
\[
    \func = \func'_{x_1}\tfrac{x_1}{k} + \cdots + \func'_{x_n}\tfrac{x_n}{k},
\]
so $\func$ has property \JProp\ at the origin $0\in\bR^{n}$.
\end{example}

Equivalently, denote by $\Calg{\pu}{\bR^n}$ the algebra of germs at $\pu$ of $\Cinfty$ functions $\bR^n\to\bR$, and for each $\func\in\Calg{\pu}{\bR^n}$ let $\Jideal{\pu}{\func}$ be the ideal in $\Calg{\pu}{\bR^n}$ generated by partial derivatives of $\func$.
Then property \JProp\ means that the germ at $\pu$ of the function $\gfunc(x) = \func(x)-\func(\pu)$ belongs to $\Jideal{\pu}{\func}$.
The following simple lemma shows that the property \JProp\ does not depend on local coordinates at $\pu$, and so it is well-defined for functions on manifolds.
\begin{lemma}\label{lm:Jrop_local_coords}
Let $\dif=(\dif_1,\ldots,\dif_n)\colon(\bR^{m},\pv)\to(\bR^{n},\pu)$ be a germ of a $\Cinfty$ map, and $\dif^{*}\colon\Calg{\pu}{\bR^n} \to \Calg{\pv}{\bR^m}$, $\dif^{*}(\func) = \func\circ\dif$, be the induced algebra homomorphism.
Then
\[
    \Jideal{\pv}{\dif^{*}(\func)} \subset \dif^{*}\bigl( \Jideal{\pu}{\func} \bigr).
\]
In particular, if $\dif$ is a diffeomorphism, then $\func\in\Jideal{\pu}{\func}$ iff $\dif^{*}(\func) \in \Jideal{\pu}{\dif^{*}(\func)}$.
\end{lemma}
\begin{proof}
Let $\py=(\pyi{1},\ldots,\pyi{m})\in\bR^{m}$.
Then for each $k=1,\ldots,m$ we have that
\[
    \ddd{(\dif^{*}(\func))}{y_k}(\py)
        = \ddd{(\func\circ\dif)}{y_k}(\py)
        = \sum_{i=1}^{n} \ddd{\func}{x_i}(\dif(\py)) \ \ddd{\dif_i}{\py}(\py)
        = \sum_{i=1}^{n} \dif^{*}(\func'_{x_i})(\py) \ \ddd{\dif_i}{\py}(\py),
\]
i.e.\ partial derivatives of $\dif^{*}(\func)$ are linear combinations with smooth coefficients of the images of partial derivatives $\dif^{*}(\func'_{x_i})$ of $\func$.
Hence, $\Jideal{\pv}{\dif^{*}(\func)} \subset \dif^{*}\bigl( \Jideal{\pu}{\func} \bigr).$
\end{proof}
\begin{theorem}[{\rm\cite[Theorem~1.3]{Maksymenko:BSM:2006}}]
\label{th:SMf_SMRf_he}
Let $\Mman$ be a smooth connected compact manifold, $\func\in\CiMR$, and $\Aman=\func(\Mman)$ be its image.
Suppose that
\begin{enumerate}[label={\rm(\alph*)}]
\item\label{enum:func:fin_crval_bd}
$\func$ takes a constant value at each connected component of $\partial\Mman$ and has only finitely many critical values;
\item\label{enum:func:prop_J}
$\func$ has property $\JProp$, see Definition~\ref{def:propJ}, at every critical point $x\in\Mman$.
\end{enumerate}
Then $\id_{\Aman}\times\SMf$ is a strong deformation retract of $\SPlMAf$.
\end{theorem}
\begin{proof}[Remarks to the proof.]
First note that if $\func$ is constant, then $\Aman$ is a point, and conditions~\ref{enum:func:fin_crval_bd} and~\ref{enum:func:prop_J} are satisfied.
Moreover, in this case $\SMf = \Diff(\Mman)$, and $\SPlMAf = \id_{\Aman} \times \Diff(\Mman) = \id_{\Aman} \times \SMf$, so the statement of the theorem is trivial.

Thus assume that $\func$ has at least two critical points.
Let $\Qman = \{ a_0, a_1, \cdots, a_n\} \subset \Aman$ be all the values of $\func$ at critical points and boundary components of $\Mman$.
Condition~\ref{enum:func:fin_crval_bd} implies that $\Qman$ is finite.
Let also $\Diff(\Aman,\Qman)$ be the subgroup of $\DiffA$ fixed on $\Qman$.
One easily checks that if $(\phi,\dif)\in\SPlMAf$, then $\phi\in\Diff(\Aman,\Qman)$, i.e.\ $\prA(\SPlMAf)\subset\Diff(\Aman,\Qman)$.
Moreover, each $\phi\in\Diff(\Aman,\Qman)$ preserves orientation of $\Aman$, and $\Diff(\Aman,\Qman)$ is convex in $\Ci{\Aman}{\Aman}$, and therefore contractible.

The main technical result of~\cite[Theorem~1.3]{Maksymenko:BSM:2006} is based on condition~\ref{enum:func:prop_J} and claims that there exists a continuous homomorphism $\theta\colon\Diff(\Aman,\Qman)\to\SPlMAf$ such that $\phi\circ\func=\func\circ\theta(\phi)$.
In other words, $\theta$ is a section of $\prA\colon\SPlMAf\to\Diff(\Aman,\Qman)$, so the statement of this theorem  follows from Lemma~\ref{lm:sect_of_homo}.
\end{proof}

\section{Fiberwise homogeneous functions on vector bundles}
\label{sect:fib_hom_func_vb}
In this section we prove Theorem~\ref{th:def_2_hom_func} including Theorem~\ref{th:Dlp_Dfol_he} as a particular case.
\subsection*{Smooth homogeneous functions}
Recall that a continuous function $\func\colon\bR^{n}\to\bR$ is \term{homogeneous} of some degree $k\geq0$, whenever $\func(t\pv)=t^k\func(\pv)$ for all $t>0$ and $\pv\in\bR^{n}$.
The following simple lemma seems to be a classical result.
However, the author did not find precise references, though there are discussions on this question in internet resources, e.g.~\cite{Dubovski:MOF:2017}.
\begin{lemma}\label{lm:homog_func}
Let $\func\colon\bR^{n}\to\bR$ be a homogeneous $\Cr{k}$ function of integer degree $k\geq0$.
Then $\func$ is a homogeneous polynomial of degree $k$.
\end{lemma}
\begin{proof}
If $k=0$, then the assumption that $\func$ is continuous and homogeneous of degree $0$, means that $\func(t\pv)=t^0\func(\pv)=\func(\pv)$ for all $t\geq0$ and $\pv\in\bR^{n}$.
In particular, for $t=0$ we get that $\func(\pv)=\func(0)$ for all $\pv\in\bR^{n}$, so $\func$ is constant.

For $k>0$, applying $\ddd{}{\pvi{i}}$, $i=1,\ldots,n$, to both sides of the identity $\func(t\pv)=t^k\func(\pv)$, we get $t\func'_{\pvi{i}}(t\pv) = t^k\func'_{\pvi{i}}(t\pv)$, whence $\func'_{\pvi{i}}(t\pv) = t^{k-1}\func'_{\pvi{i}}(t\pv)$, i.e.\ every partial derivative $\func'_{\pvi{i}}$ is a homogeneous $C^{k-1}$ function.
Then, by induction on $k$, $\func'_{\pvi{i}}$ must be a homogeneous polynomial of degree $k-1$.
Hence, by Euler's identity, $\func(\pv) = \tfrac{1}{k}\sum_{i=1}^{n} \pvi{i} \func'_{\pvi{i}}(\pv)$ is a homogeneous polynomial of degree $k$.
\end{proof}

We will identify $\Bman$ with the image of the zero section of $\vbp$ in $\Eman$.
A continuous function $\func\colon\Eman\to\bR$ is called \term{homogeneous} of degree $k\geq0$ (or \term{$k$-homogeneous}) whenever $\func(t \px) = t^{k}\func(\px)$ for all $t\geq0$ and $\px\in\Eman$.

More generally, let $\vbp\colon\Eman\to\Bman$ be a smooth vector bundle of rank $n$ over a manifold $\Bman$.
Assume that $\func\colon\Eman\to\bR$ is homogeneous.
Then $\Bman\subset\func^{-1}(0)$, however, in general, this inclusion is non-strict.
We will say that $\func$ is \term{definite}, whenever $\func(\px)>0$ for all $\px\in\Eman\setminus\Bman$.
In particular, $\Bman = \func^{-1}(0)$.

\begin{corollary}\label{cor:homog_func}
Let $\vbp\colon\Eman=\Bman\times\bR^{n} \to \Bman$ be a trivial vector bundle, and $\func\colon\Eman\to\bR$ be a $k$-homogeneous $\Cr{r}$ function with $k\leq r$.
Then
\begin{equation}\label{equ:f_homog_on_vb}
    \func(\py,\pvi{1},\ldots,\pvi{n})
    =
    \sum\limits_{
            \substack{
                    i_1,\ldots,i_n\in\{0,\ldots,k\} \\
                    i_1+\cdots+i_n = k
            }
    }
    a_{i_1,\ldots,i_n}(\py)\, \pvi{1}^{i_1} \cdots \pvi{n}^{i_n},
\end{equation}
where each $a_{i_1,\ldots,i_n}\colon\Bman\to\bR$ is some $\Cr{r-k}$ function.
\end{corollary}
\begin{proof}
Notice that $k$-homogeneity of $\func$ means that for every $(\py,\pu)\in\Bman\times\bR^{n}$ and $t\geq0$ we have that $\func(\py,t\pu) = t^{k}\func(\py,\pu)$.
Now the proof follows from the Lemma~\ref{lm:homog_func}.
\end{proof}
\begin{corollary}\label{cor:homog_func_prop_J}
Let $\func\colon\Eman\to\bR$ be a $\Cinfty$ homogeneous function of degree $k\geq2$.
Then $\func$ satisfies condition \JProp\ at each $\px\in\Bman$.
\end{corollary}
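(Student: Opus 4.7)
The plan is to reduce the statement to a local coordinate computation on a trivialization of the vector bundle, and then invoke the form of $\func$ provided by Corollary~\ref{cor:homog_func} together with Euler's identity applied only in the fiber variables.

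First, since property \JProp{} is coordinate-invariant (Lemma~\ref{lm:Jrop_local_coords}), I may pick any convenient chart near $\px\in\Bman$. I would choose a local trivialization of $\vbp:\Eman\to\Bman$ around $\px$, giving a chart $\Uman\cong \Vman\times\bR^{n}$ with coordinates $(\py,\pv)=(\pyi{1},\ldots,\pyi{m},\pvi{1},\ldots,\pvi{n})$, where $m=\dim\Bman$ and $\px=(\py_0,0)$. In this chart, fiberwise scalar multiplication by $t\geq0$ takes the form $t\cdot(\py,\pv)=(\py,t\pv)$, so $\func$ is $k$-homogeneous in the fiber variable $\pv$. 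In particular, evaluating $\func(\py,0) = \func(\py,t\cdot 0) = t^{k}\func(\py,0)$ at $t=0$ with $k\geq1$ yields $\func(\py,0)=0$; hence $\func(\px)=0$.

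Next, Corollary~\ref{cor:homog_func} applies to the trivial bundle $\Vman\times\bR^{n}\to\Vman$ and gives the decomposition
\[
    \func(\py,\pv)=\sum_{i_1+\cdots+i_n=k} a_{i_1,\ldots,i_n}(\py)\,\pvi{1}^{i_1}\cdots\pvi{n}^{i_n},
\]
with smooth coefficients $a_{i_1,\ldots,i_n}:\Vman\to\bR$. Thus $\func$ is a homogeneous polynomial of degree $k$ in the fiber variables $\pv$, with smooth dependence on $\py$.

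Finally, I apply the Euler identity in the fiber variables only. Since each monomial $\pvi{1}^{i_1}\cdots\pvi{n}^{i_n}$ of total degree $k$ satisfies $\sum_{i=1}^{n}\pvi{i}\,\partial_{\pvi{i}}(\pvi{1}^{i_1}\cdots\pvi{n}^{i_n})=k\,\pvi{1}^{i_1}\cdots\pvi{n}^{i_n}$, summing over monomials gives
\[
    \sum_{i=1}^{n}\pvi{i}\,\ddd{\func}{\pvi{i}}(\py,\pv)=k\,\func(\py,\pv).
\]
Combined with $\func(\px)=0$, this yields the required representation
\[
    \func(\py,\pv)-\func(\px)=\sum_{j=1}^{m}\ddd{\func}{\pyi{j}}(\py,\pv)\cdot 0+\sum_{i=1}^{n}\ddd{\func}{\pvi{i}}(\py,\pv)\cdot\tfrac{\pvi{i}}{k},
\]
showing that \JProp{} holds at $\px$ with $\alpha$-coefficients $0$ on the base coordinates and $\pvi{i}/k$ on the fiber coordinates. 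These coefficients are manifestly smooth, so there is no real obstacle; the only point to verify carefully is that Corollary~\ref{cor:homog_func} is applicable in full neighborhoods (including across the base direction), which is precisely what that corollary guarantees. The hypothesis $k\geq 2$ is not actually needed for this argument (any $k\geq1$ would suffice), but is retained to match the statement.
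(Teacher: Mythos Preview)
Your proof is correct and follows essentially the same route as the paper: pass to a local trivialization via Lemma~\ref{lm:Jrop_local_coords}, invoke the polynomial form from Corollary~\ref{cor:homog_func}, and apply Euler's identity in the fiber variables to obtain $\func(\py,\pv)=\tfrac{1}{k}\sum_{i}\pvi{i}\,\func'_{\pvi{i}}(\py,\pv)$. Your added remark that the argument already works for $k\geq1$ is correct; the paper's mention of $k\geq2$ serves only to note that points of $\Bman$ are then critical, which is the context in which condition~\JProp{} is actually used later.
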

\begin{proof}
Due to Lemma~\ref{lm:Jrop_local_coords} one can pass to a local trivialization of $\vbp$ at $\px$, and thus assume that $\vbp\colon\Eman\to\Bman$ is a trivial vector bundle.
Since $k\geq2$, every point of $\Bman$ is critical.
Then by~\eqref{equ:f_homog_on_vb} and ``Euler's identity with respect to the coordinates $\pvi{1},\ldots,\pvi{n}$'' we have that $\func(\py,\pv) = \tfrac{1}{k}\sum_{i=1}^{n} \pvi{i}\func'_{\pvi{i}}(\py,\pv)$.
Hence, $\func$ satisfies property \JProp\ at $\px$.
\end{proof}

\subsection*{Stabilizers of homogeneous functions}
Let $\vbp\colon\Eman\to\Bman$ be a smooth vector bundle of rank $n$ over a manifold $\Bman$.
Let also $\func\colon\Eman\to[0;+\infty)$ be a $\Cinfty$ function such that $1$ is a regular value of $\func$, so $\ATor = \func^{-1}(\UInt)$ is a submanifold of $\Eman$, and $\AFoliation = \{\func^{-1}(t) \mid t\in\UInt\}$ be the partition of $\ATor$ into level sets of $\func$.

The following statement is a particular case of~\cite[Theorem~1.3]{Maksymenko:BSM:2006}.
However, the proof is essentially simpler, and it additionally takes to account the behavior of diffeomorphisms on $\partial\ATor$.

\begin{lemma}[{\rm c.f.~\cite[Theorem~1.3]{Maksymenko:BSM:2006}}]
\label{lm:sigma:k-homog}
Let $k\geq1$, $\func\colon\Eman\to\bR$ be a $k$-homoge\-neous $\Cinfty$ function, $\ATor = \func^{-1}(\UInt)$, and $\AFoliation = \{\func^{-1}(t) \mid t\in\UInt\}$ be the partition of $\ATor$ into level sets of $\func$.
Then there exists a homomorphism $\smprA\colon\DiffPlI\to\FolDiffdATor$ such that $\phi\circ\func = \func\circ\smprA(\dif)$ for all $\phi\in\DiffPlI$.
In particular, $(\phi,\smprA(\phi))\in\SMAf$.

If $\ATor$ is compact, then $\smprA$ is continuous, and the pair $\bigl(\SMf,\ \SMfd\bigr)$ is a strong deformation retract of the pair $\bigl(\SMAf,\ \SMAfd\bigr)$ with respect to the natural inclusion $\sincl\colon\SMf\equiv\{\id_{\UInt}\}\times\SMf\subset\SMAf$.
\end{lemma}
\begin{proof}
Let $\phi\in\DiffPlI$, so $\phi\colon\UInt\to\UInt$ is a $\Cinfty$ function such that $\phi(0)=0$, $\phi(1)=1$, and $\phi'>0$.
Then by the arguments of Hadamard lemma:
\[
    \phi(t) = \smallint\limits_0^t \phi'(u)du =
    \left\vert
        \substack{\text{replace $u = st$,} \\
                    \text{then $du = s\,dt$}}
    \right\vert =
    t \underbrace{\smallint\limits_0^1 \phi'(st)dt}_{\gfunc_{\phi}(t)} = t \gfunc_{\phi}(t),
\]
where $\gfunc_{\phi}\colon\UInt\to\bR$ is $\Cinfty$.
Moreover, $\gfunc_{\phi}(t) > 0$ for all $t\in\UInt$ and $\gfunc_{\phi}(0)=\phi'(0)$.

\newcommand\kpow[1]{\left[#1\right]^{1/k}}

Define the map
\begin{align*}
&\smprA\colon\DiffPlI \to \Ci{\ATor}{\ATor},&
&\smprA(\phi)(\px) = \kpow{\gfunc_{\phi}(\func(\px))}\px.
\end{align*}
We claim that the image of $\smprA$ is contained in $\DiffAFol$, and the induced map $\smprA\colon\DiffPlI \to \DiffAFol$ is the desired homomorphism.

\begin{enumerate}[wide, label={\arabic*)}]
\item\label{enum:theta_gen:id_to_id}
First we show that $\smprA$ is a homomorphism of \term{monoids} with respect to the natural composition of maps.
Then by Lemma~\ref{lm:monoids_homo} it sends invertible elements to invertible, and therefore $\smprA(\dif)$ will be a diffeomorphism of $\ATor$.

a) Indeed, if $\phi=\id_{\UInt}$, then $\gfunc_{\phi}(t)\equiv 1$, whence $\smprA(\id_{\UInt}) = \id_{\ATor}$.

b) Furthermore, let $\iii{\phi},\jjj{\phi}\in\DiffPlI$ be two diffeomorphisms.
Then $\iii{\phi}(t) = t \iii{\gfunc}(t)$ and $\jjj{\phi}(t) = t\jjj{\gfunc}(t)$ for unique $\Cinfty$ functions $\iii{\gfunc},\jjj{\gfunc}\colon\UInt\to\bR$ such that
\begin{align*}
\smprA(\iii{\phi})(\px) &= \kpow{\iii{\gfunc}(\func(\px))}\px, &
\smprA(\jjj{\phi})(\px) &= \kpow{\jjj{\gfunc}(\func(\px))}\px.
\end{align*}
Hence
\begin{align*}
    \smprA(\jjj{\phi})\circ\smprA(\iii{\phi})(\px) &=
    \smprA(\jjj{\phi})\bigl(\kpow{\iii{\gfunc}(\func(\px))}\,\px\bigr) \\
&=
    \kpow{
        \jjj{\gfunc}\Bigl(
            \func\bigl(
                    \kpow{\iii{\gfunc}(\func(\px))}\,\px
                \bigr)
            \Bigr)
        }
        \cdot
        \kpow{\iii{\gfunc}(\func(\px))}\,\px = \\
    &=
    \kpow{
        \jjj{\gfunc}\bigl[\iii{\gfunc}(\func(\px)) \cdot\func(\px)\bigr]
        }
        \cdot
        \kpow{\iii{\gfunc}(\func(\px))}\,\px.
\end{align*}
On the other hand, $\jjj{\phi}\circ\iii{\phi}(t) = \iii{\phi}(t) \cdot \jjj{\gfunc}(\iii{\phi}(t)) = t \cdot \underbrace{\iii{\gfunc}(t) \cdot\jjj{\gfunc}(t \iii{\gfunc}(t))}_{\bar{\gfunc}}$, whence
\begin{align*}
    \smprA\bigl(\jjj{\phi}\circ\iii{\phi}\bigr)(\px) &=
    \kpow{\bar{\gfunc}(\func(\px))}\,\px \\
    &=
    \kpow{
        \iii{\gfunc}(\func(\px)) \cdot
        \jjj{\gfunc}\bigl[\func(\px) \cdot\iii{\gfunc}(\func(\px))\bigr]
    }\,\px =
    \smprA(\jjj{\phi})\circ\smprA(\iii{\phi})(\px).
\end{align*}

\item
Let us verify that $(\phi,\smprA(\phi))\in\SMRf$.
Indeed,
\[
    \func\circ\smprA(\phi)(\px)
        = \func\Bigl( \kpow{\gfunc_{\phi}\circ\func(\px)}\,\px  \Bigr)
        = (\gfunc_{\phi}\circ\func(\px)) \cdot \func(\px)
        = \phi\circ \func(\px).
\]
This implies that $(\phi,\smprA(\phi))\in\SMRf$, and in particular, $\smprA(\phi)\in\DiffAFol$.

\item
Finally, note that $\gfunc_{\phi}(1) = \phi(1)/1 = 1$, whence if $\px\in\partial\ATor=\func^{-1}(1)$, then $\smprA(\phi)(\px)=\px$, so $\smprA(\phi)$ is fixed on $\partial\ATor$.
Thus, $\smprA(\phi)\in\FolDiffdATor$.

\item
Suppose $\ATor$ is compact.
Then continuity of $\smprA$ directly follows from the formulas for $\smprA(\dif)$.

Consider the following short exact sequence, see~\eqref{equ:diag_rel_Stabs}:
\[
    1 \to \SMf \xrightarrow{\sincl} \SMAf \xrightarrow{\prM}  \DiffPlI \to 1.
\]
Then the map $\widehat{\smprA}:\DiffPlI\to\SMAf$, $\widehat{\smprA}(\phi)=(\phi,\smprA(\phi))$, is a continuous section of $\prA$ and its image is contained in $\SMAfd$.
Moreover, $\DiffPlI$ is contractible into $\id_{\UInt}$ via the homotopy
\begin{align*}
   &H\colon\DiffPlI\times\UInt\to\DiffPlI, &
   &H(\phi,t) = (1-t)\phi + t \id_{\UInt}.
\end{align*}
Then, due to Lemma~\ref{lm:sect_of_homo}, the pair $(\SMf,\SMfd)$ is a strong deformation retract of $(\SMAf,\SMAfd)$ with respect to the inclusion map $\sincl$.
\qedhere
\end{enumerate}
\end{proof}

\subsection*{Foliated maps for homogeneous functions}
Let $\vbp_i\colon\Eman_i\to\Bman_i$ for $i=0,1$ be a smooth vector bundle of some rank $n_i$ over a compact manifold $\Bman_i$ and $\func_i\colon\Eman_i\to[0;+\infty)$ be a $\Cinfty$ function such that $1\in\bR$ is a regular value for $\func$.
Then $\ATor_i := \func_i^{-1}(\UInt)$ is a submanifold of $\Eman_i$.
Let also
\[
    \AFoliation_i = \{\func_i^{-1}(t) \mid t\in\UInt\}
\]
be the partition of $\ATor_i$ into the level sets of $\func$.

A map $\dif\colon\ATor_0 \to \ATor_1$ will be called \term{$(\AFoliation_0,\AFoliation_1)$-foliated} if for each leaf $\omega$ of $\AFoliation_0$ its image $\dif(\omega)$ is contained in some leaf of $\AFoliation_1$.
Denote by $\CiTwoFol$ the subset of $\Ci{\ATor_0}{\ATor_1}$ consisting of $(\AFoliation_0,\AFoliation_1)$-foliated maps.
If $\ATor_0$ and $\ATor_1$ are diffeomorphic, then we denote by $\DiffTwoFol$ the \term{set} of all $(\AFoliation_0,\AFoliation_1)$-foliated diffeomorphisms.

\begin{lemma}\label{lm:F0F1_folated}
Suppose the functions $\func_0$ and $\func_1$ are $2$-homogeneous and definite.
\begin{enumerate}[wide]
\item\label{enum:sigma:exists}
Then for each $(\AFoliation_0,\AFoliation_1)$-foliated map $\dif\colon\ATor_0 \to \ATor_1$ there exists a unique function $\smprM(\dif)\colon\UInt\to\UInt$ such that $\smprM(\dif)\circ\func_0 = \func_1\circ\dif$.
Moreover, if $\dif$ is $\Cinfty$, then $\smprM(\dif)$ is also $\Cinfty$ and the correspondence $\dif\mapsto\smprM(\dif)$ is a continuous map $\smprM\colon\CiTwoFol\to\Ci{\UInt}{\UInt}$.

\item\label{enum:sigma:dif_to_dif}
If $\ATor_0$ and $\ATor_1$ are diffeomorphic, then $\smprM(\dif)\in\DiffPlI$ is a preserving orientation diffeomorphism of $\UInt$ for each $\dif\in\DiffTwoFol$.

\item\label{enum:sigma:homomorphism}
Suppose $\vbp_0=\vbp_1$ is the same vector bundle and $\func_0=\func_1\colon \Eman_0=\Eman_1 \to \bR$, so $\ATor_0=\ATor_1$, $\AFoliation_0=\AFoliation_1$, and thus $\DiffTwoFol=\FolDiff(\AFoliation_0)$.
Then $\smprM\colon\Ci{\AFoliation_0}{\AFoliation_0}\to\Ci{\UInt}{\UInt}$ is a (continuous) homomorphism of monoids.
In particular, $\smprM(\FolDiff(\AFoliation_0)) \subset \DiffPlI$.
\end{enumerate}
\end{lemma}
\begin{proof}
\ref{enum:sigma:exists}
The assumption that $\dif\colon\ATor_0 \to \ATor_1$ is $(\AFoliation_0,\AFoliation_1)$-foliated means that for each leaf $A_t = \func_0^{-1}(t)$, $t\in\UInt$ of $\AFoliation_0$ there exist a unique leaf $B_{t'} = \func_1^{-1}(t')$ such that $\dif(A_t)\subset B_{t'}$.
Define the function $\rphi:\UInt\to\UInt$ by $\rphi(t)=t'$.
Then $\rphi\circ\func_0=\func_1\circ\dif$, and then we put $\smprM(\dif)=\rphi$.
Hence, such $\smprM(\dif)$ is unique.

Suppose $\dif$ is $\Cinfty$.
To prove that $\smprM(\dif)$ is $\Cinfty$ as well we will use Whitney Lemma~\ref{lm:Whitney_even_func}.
Fix any point $\pw\in\partial\ATor_0$, so $\func_0(\pw)=1$, and consider the path $\eta\colon[-1;1]\to\ATor_0$ given by $\eta(t) = t\pw$.
Then
\begin{equation}\label{equ:f_0_eta__t2}
    \func_0\circ\eta(t) = \func_0(t\pw) = t^{2}\func_0(\pw) = t^{2}.
\end{equation}
In particular, for each $t\in[-1;1]$ the points $\eta(t)$ and $\eta(-t)$ belong to the same leaf $\func_0^{-1}(t^{2})$ of $\AFoliation_0$.

Define the following $\Cinfty$ function $\gamma = \func_1\circ\dif\circ\eta\colon[-1,1]\to\bR$.
Since $\dif$ sends level sets of $\func_0$ to level sets of $\func_1$, the points $\dif(\eta(t))$ and $\dif(\eta(-t))$ also belong to the same level set of $\func_1$, that is
\[
    \gamma(-t) = \func_1\circ\dif\circ\eta(-t) = \func_1\circ\dif\circ\eta(t) = \gamma(t).
\]
Hence, $\gamma$ is an even function.
Then, due to Lemma~\ref{lm:Whitney_even_func}, there exists a unique $\Cinfty$ function $\rphi:\UInt\to\UInt$ such that $\gamma(t) = \rphi(t^2)$.
Thus,
\begin{equation}\label{equ:fh_phif}
    \func_1\circ\dif\circ\eta(t) = \gamma(t) = \rphi(t^2) \stackrel{\eqref{equ:f_0_eta__t2}}{=} \rphi\circ\func_0\circ\eta(t).
\end{equation}
We claim that $\func_1\circ\dif\equiv \rphi\circ\func_0$ on all of $\ATor_0$.
Indeed, let $\py\in\ATor_0$ and $\func_0(\py) = s$ for some $s\in\UInt$.
Then $\func_0(\eta(\sqrt{s})) = s$ as well, so the points $\py$ and $\eta(\sqrt{s})$ belong to the same level set $\func_0^{-1}(s)$ of $\func_0$.
Hence, $\dif(\py)$ and $\dif(\eta(\sqrt{s}))$ also belong to the same level set of $\func_1$.
Therefore,
\begin{equation}\label{equ:fh_phif_full}
    \func_1\circ\dif(\py)
        = \func_1\circ\dif\circ\eta(\sqrt{s})
        \stackrel{\eqref{equ:fh_phif}}{=} \rphi\circ\func_0\circ\eta(\sqrt{s})
        = \rphi\circ\func_0(\py).
\end{equation}

Continuity of the correspondence $\dif\mapsto\rphi$ also follows from Lemma~\ref{lm:Whitney_even_func}.

\ref{enum:sigma:dif_to_dif}
Suppose $\dif$ is a diffeomorphism.
We should show that then $\rphi$ is a preserving orientation diffeomorphism of $\UInt$.
It suffices to check the following two properties of $\gamma$.
\begin{enumerate}[label={\rm(\alph*)}]
\item\label{enum:gamma_prop:t>0} $\gamma'(t)>0$ for $t>0$;
\item\label{enum:gamma_prop:t=0} $\gamma(t) = t^2 \delta(t)$ for some $\Cinfty$ function $\delta:\UInt\to\bR$ such that $\delta(0)>0$.
\end{enumerate}
Assuming they are proved, let us show that $\rphi\in\DiffPlI$.
Indeed, it is evident that $\rphi(0)=0$ and $\rphi(1)=1$, so we need to verify that $\rphi'>0$.
Since $\rphi(s) = \gamma(\sqrt{s})$ for $s\in\UInt$, we get from~\ref{enum:gamma_prop:t>0} that $\rphi'(s)>0$ for $s\in(0;1]$.
Furthermore, as $\rphi(0)=0$, we have by the Hadamard lemma that $\rphi(s) = s \psi(s)$ for some $\Cinfty$ function $\psi\colon\UInt\to\bR$ such that $\psi(0)=\rphi'(0)$.
Then, due to~\ref{enum:gamma_prop:t=0}, $t^2\delta(t) = \gamma(t) = \rphi(t^2) = t^2 \psi(t^2)$, whence $\delta(t)=\psi(t^2)$, and therefore $\rphi'(0) = \psi(0) = \delta(0)>0$.

{\bfseries Proof of \ref{enum:gamma_prop:t>0}.}
Note that if $\zeta\colon[a,b]\to\ATor_0$ is a smooth path not passing through $\Bman$, then for each $t\in[a;b]$ the following conditions are equivalent:
\begin{itemize}
\item $t$ is a regular point of the function $\func_0\circ\zeta\colon[a;b]\to\bR$;
\item $\zeta$ is transversal at $t$ to the level set $\func_0^{-1}(\func_0\circ\zeta(t))$.
\end{itemize}
Since $\func_0\circ\eta(t)=t^2$, it follows that the function $\func_0\circ\eta\colon[-1,1]\to\bR$ has a unique critical point $t=0$, and thus $\eta$ is transversal to all level sets $\func_0^{-1}(s)$ for all $s\in(0;1]$.

On the other hand, as $\dif$ sends level sets of $\func_0$ to level sets of $\func_1$, the path $\dif\circ\eta$ must also be transversal to the level sets of $\func_1$ for all $t\not=0$.
More precisely, if $\eta$ is transversal at some $t\not=0$ to the leaf $\Lman = \func_0^{-1}(t^2)$, then $\dif\circ\eta$ is transversal at $t$ to the leaf
\[ \dif(\Lman) = \func_1^{-1}(\func_1\circ\dif\circ\eta(t)) = \func_1^{-1}(\gamma(t)).\]
This implies that \term{the function $\gamma=\func_1\circ\dif\circ\eta$ has a unique critical point $t=0$}.

Moreover, since $\gamma(-1)=\gamma(1)=1$ and $\gamma(0)=0$, we see that $\gamma$ decreases on $[-1,0)$ and increases on $(0;1]$.
In particular, $\gamma'(t)<0$ for $t<0$ and $\gamma'(t)>0$ for $t>0$.

{\bfseries Proof of~\ref{enum:gamma_prop:t=0}.}
Let $q_0:\Uman\times\bR^{n}\to\Uman$ and $q_1:\Vman\times\bR^{n}\to\Vman$ be vector bundle trivializations of $\vbp$ over open neighborhood $\Uman$ of $\eta(0)$ and $\Vman$ of $\dif(\eta(0))$ respectively.
Then $\dif$ has local representation as an embedding $\dif=(\dif_0,\dif_1,\ldots,\dif_n)\colon\Uman\times\bR^{n} \supset \Wman \to \Vman\times\bR^{n}$ of some open neighborhood $\Wman$ of $\eta(0)$ in $\Uman\times\bR^{n}$, where $\dif_0:\Wman\to\Vman$ is a $\Cinfty$ map, and each $\dif_i:\Wman\to\bR$ is a $\Cinfty$ function.

One can assume that the image of $\eta$ is contained in $\Wman$, and $\pw = (\px, \bar{\pu}) \in \Uman\times\bR^{n}$ are the coordinates of $\pw$, so $\eta(t) = (\px, t\bar{\pu})$.

Then by Corollary~\ref{cor:homog_func} the restriction of $\func_1$ to each fiber of $\vbp$ is a $2$-homogeneous polynomial, i.e.\
$\func_1(\py,\pv) =\sum\limits_{1\leq i,j \leq n} a_{ij}(\py) \pvi{i} \pvi{j}$ for all $(\py,\pv)\in\Vman\times\bR^{n}$.
One can assume that $a_{ij}=a_{ji}$, so we get a symmetric matrix $A(\py)=(a_{ij})$, such that $\func_1(\py,\pv) = \pv A(\py) \pv^{t}$, where $\pv=(\pvi{1},\ldots,\pvi{n})$, and $\pv^{t}$ is the transposed vector column.
Hence,
\[
    \gamma(t) = \func_1\circ\dif\circ\eta(t)
                = \func_1\bigl(\dif(\px,t\bar{\pu}) \bigr)
                = \bar{\dif}(\px,t\bar{\pu}))\cdot A(\dif_0(\px,t\bar{\pu})) \cdot \bar{\dif}^{t}(\px,t\bar{\pu})),
\]
where $\widehat{\dif}=(\dif_1,\ldots,\dif_n)$.
By the Hadamard lemma $\dif_i(\px,\pu)=\sum\limits_{j=1}^{n} \dif_{ij}(\px,\pu)\pui{j}$ for some $\Cinfty$ functions $\dif_{ij}\colon\Wman\to\bR$ such that $\dif_{ij}(\px,0) = \ddd{}{\pui{j}}\dif_i$.
Let $J(\px,\pu)=(\dif_{ij}(\px,\pu))$ the matrix whose $i$th row consists of the functions $\dif_{i1},\ldots,\dif_{in}$.
Notice that $J(\px,0)$ is the Jacobi matrix of the composition
\[
    (\px\times\bR^n)\cap\Wman \xrightarrow{~\dif~} \Vman\times \bR^{n} \xrightarrow{~p_2~} \bR^{n}.
\]
Since $\dif$ is a diffeomorphism leaving invariant zero section $\Bman$, it follows that $J(\px,0)$ is non-degenerate.

One can also write $\widehat{\dif}(\px,\pu) = J(\px,\pu) \pu^{t}$, whence
\[
    \gamma(t) = t \bar{\pu} \cdot
                J(\px,t\bar{\pu})^{t} \cdot
                A(\dif_0(\px,t\bar{\pu})) \cdot
                J(\px,t\bar{\pu}) \cdot t \bar{\pu}^{t},
\]
which implies that
\[
    \delta(t) = \gamma(t)/t^2
                = \pu J(\px,t\pu)^{t} \cdot A(\dif_0(\px,t\bar{\pu})) \cdot J(\px,t\pu) \pu^{t}.
\]
Therefore,
\[
    \delta(0)
        = \pu \underbrace{J(\px,0)^{t} \cdot A(\dif_0(\px,0)) \cdot J(\px,0)}_{B(\px)} \pu^{t}
        = \pu B(\px) \pu^{t}.
\]
The assumption that $\func_1$ is definite means that $A(\dif_0(\px,0))$ is non-degenerate, whence $B$ is symmetric and non-degenerate as well, and therefore $\delta(0) =\pu B(\px) \pu^{t}>0$.

\ref{enum:sigma:homomorphism}
Suppose $\vbp_0=\vbp_1$ is the same vector bundle and $\func_0=\func_1\colon\Eman_0=\Eman_1 \to \bR$.
Since $\id_{\UInt}\circ\func_0 = \func_0\circ\id_{\ATor}$, it follows from uniqueness of $\smprM$, that $\smprM(\id_{\ATor})=\id_{\UInt}$.
Moreover, if $\dif,\dif'\in\FolDiff(\AFoliation_0)$, then
\begin{equation}\label{equ:sigma_unique}
    \smprM(\dif')\circ\smprM(\dif)\circ\func_0 =
    \smprM(\dif')\circ\func_0\circ\dif  =
    \func_0\circ\dif'\circ\dif =
    \smprM(\dif'\circ\dif)\circ\func_0.
\end{equation}
Now uniqueness of $\smprM$ implies that $\smprM(\dif')\circ\smprM(\dif)=\smprM(\dif'\circ\dif)$.
Thus, $\smprM$ is a homomorphism of monoids.
\end{proof}

\begin{example}\rm
Notice that the assumption that $\func_0$ and $\func_1$ are of the same homogeneity order is essential for $\phi$ to be a diffeomorphism.
Indeed, let $\func_0,\func_1\colon\Bman\times\bR^n\to[0;1]$ be given by $\func_0(\al,\px)=\dnrm{\px}^2$ and $\func_1(\px)=\dnrm{\px}^4$.
Then $\func_0$ is $2$-homogeneous, while $\func_1$ is $4$-homogeneous, $\AFoliation_0=\AFoliation_1$, and $\ATor_0=\func_0^{-1}(\UInt)=\func_1^{-1}(\UInt)=\ATor_1$.
Let also $\dif=\id_{\Bman\times\bR^n}$.
Then $\dif$ is a $(\AFoliation_0,\AFoliation_1)$-foliated diffeomorphism, while $\phi\colon\UInt\to\UInt$, $\phi(t)=t^2$, is a unique $\Cinfty$ map satisfying $\phi\circ\func_0=\func_1\circ\dif$.
However, $\phi$ is not a diffeomorphism.

It seems plausible that Lemma~\ref{lm:F0F1_folated} holds for definite homogeneous functions of the same degree $>2$, but one needs an analogue of Whitney Lemma~\ref{lm:Whitney_even_func} for ``evenness of higher order''.
\end{example}

\subsection*{Fiberwise definite $2$-homogeneous functions}
Let $\vbp\colon\Eman\to\Bman$ be a smooth vector bundle of rank $n$ over a compact manifold $\Bman$, $\func\colon\Eman\to\bR$ be a definite $2$-homogeneous $\Cinfty$ function, $\ATor = \func^{-1}(\UInt)$, and $\AFoliation = \{\func^{-1}(t) \mid t\in\UInt\}$ the partition of $\ATor$ into level sets of $\func$.
Then we have two homomorphisms $\smprA\colon\DiffPlI\to\DiffAFol$ and $\smprM\colon\DiffAFol \to \DiffPlI$, defined in Lemmas~\ref{lm:sigma:k-homog} and~\ref{lm:F0F1_folated} respectively, such that $\smprM(\dif)\circ\func=\func\circ\dif$ and $\rphi\circ\func=\func\circ\smprA(\rphi)$ for all $\dif\in\DiffAFol$ and $\rphi\in\DiffPlI$, so we have well-defined homomorphisms
\begin{equation}\label{equ:hat_sigma_theta}
\begin{aligned}
&\widehat{\smprM}\colon \DiffAFol \to \SMAf, && \widehat{\smprM}(\dif) = (\smprM(\dif),\dif), \\
&\widehat{\smprA}\colon \DiffPlI  \to \SMAf, && \widehat{\smprA}(\rphi) = (\rphi, \smprA(\rphi)).
\end{aligned}
\end{equation}

Notice that the foliation described in Theorem~\ref{th:Dlp_Dfol_he} corresponds to the definite homogeneous function $\func\colon\Circle\times\bR^2\to\bR$, $\func(\al,\px,\py)=\px^2+\py^2$, on the trivial vector bundle $\vbp\colon\Circle\times\bR^2\to\Circle$ of rank $2$ over the circle.
Therefore, Theorem~\ref{th:Dlp_Dfol_he} is a particular case of the following:
\begin{theorem}\label{th:def_2_hom_func}
The homomorphism
\begin{align*}
    &\prM\colon\SMRf \to \DiffAFol, &
    &\prM(\rphi,\dif)=\dif,
\end{align*}
induces an isomorphism of the following short exact sequences:
\begin{equation}\label{equ:diag_rel_Stabs:ext}
\begin{gathered}
\xymatrix{
1 \ar[r] &
\SMf \ar[r]^-{\sincl} \ar@{=}[d] &
\SMAf  \ar[rr]^-{\prA}           \ar@/_1ex/[d]_-{\prM} &&
\DiffPlI \ar@/^1ex/[ll]^-{\widehat{\smprA}}  \ar@{=}[d]            \ar[r] &  1 \\
1 \ar[r] &
\DiffAFolLp  \ar@{^(->}[r] &
\DiffAFol \ar@/_1ex/[u]_-{\widehat{\smprM}} \ar[rr]^-{\smprM} &&
\DiffPlI \ar@/^1ex/[ll]^-{\smprA} \ar[r] & 1
}
\end{gathered}
\end{equation}
and its inverse is $\widehat{\smprM}$.
Moreover, the pair $\bigl(\DiffAFolLp,\FolLpDiff(\AFoliation,\partial\ATor)\bigr)$ is a strong deformation retract of $\bigl(\DiffAFol,\FolDiff(\AFoliation,\partial\ATor)\bigr)$.
\end{theorem}
\begin{proof}
Evidently, $\prM\circ\widehat{\smprM}(\dif)=\dif$ for all $\dif\in\DiffAFol$, in particular, $\prM$ is surjective.
Since it is also injective, it follows that $\prM$ and $\widehat{\smprM}$ are mutually inverse isomorphisms of topological groups.
Moreover, by Lemma~\ref{lm:DAM_DM}, $\SMf=\DiffAFolLp$, and $\prM\circ\sincl = \id_{\SMf}$.
It is also evident that
\[
\prM(\SMAfd) = \Diff(\AFoliation,\partial\ATor).
\]

Now, due to Lemma~\ref{lm:sigma:k-homog}, the pair $\bigl(\SMf,\SMfd\bigr)$ is a strong deformation retract of $\bigl(\SMAf,\SMAfd\bigr)$ with respect to the inclusion $\sincl\colon\SMf\subset\SMAf$.
Hence, the pair $\bigl(\DiffAFolLp,\FolLpDiff(\AFoliation,\partial\ATor)\bigr)$ is a strong deformation retract of $\bigl(\DiffAFol,\FolDiff(\AFoliation,\partial\ATor)\bigr)$.
\end{proof}

\section{High-dimensional analogues of lens spaces}
\label{sect:proof:th:DiffLpq_fol_homtype}
In this section we prove Theorem~\ref{th:DFLlp_DPlFLfol} which includes Theorem~\ref{th:DiffLpq_fol_homtype_a} as a particular case.

For $i=0,1$ let $\vbp_i\colon\Eman_i\to\Bman_i$ be a smooth vector bundle of some rank $n_i$ over a compact manifold $\Bman_i$ and $\func_i\colon\Eman_i\to\bR$ be a definite $k_i$-homogeneous $\Cinfty$ function for some $k_i>0$.
Put
\begin{align*}
    \ATor_i  &:= \func_i^{-1}(\UInt),  &
    \AOTor_i &:= \func_i^{-1}\bigl([0;1)\bigr)=\ATor_i\setminus\partial\ATor_i, &
    \BLman   &:= \AOTor_0 \sqcup \AOTor_1.
\end{align*}
Denote by $\AFoliation_i = \{\func_i^{-1}(t) \mid t\in\UInt\}$ the partition of $\ATor_i$ into the level sets of $\func$.

\begin{lemma}\label{lm:properties_of_xi}
Suppose there exists a diffeomorphism $\psi\colon\partial\ATor_0 \to \partial\ATor_1$.
Then the map
\begin{align}\label{equ:gxi_gluing_map_general}
    &\gxi\colon\AOTor_0\setminus\Bman_0 \to \AOTor_1\setminus\Bman_1, &
    &\gxi(\px) = \rt{1-\func_0(\px)}{k_1} \cdot \psi\bigl(x / \rt{\func_0(\px)}{k_0}\bigr),
\end{align}
is a diffeomorphism such that
\begin{equation}\label{equ:xi_leaf_to_leaf}
    \func_0(\px) + \func_1(\gxi(\px)) = 1, \quad \px\in\AOTor_0\setminus\Bman_0,
\end{equation}
which is the same as $\gxi (\func_0^{-1}(t)) = \func_1^{-1}(1-t)$ for all $t\in(0;1)$, so $\gxi$ is a $(\AFoliation_0,\AFoliation_1)$-foliated diffeomorphism.
\end{lemma}
\begin{proof}
Let $\px\in\AOTor_0\setminus\Bman_0$, and $\py=\px/\rt{\func_0(\px)}{k_0}$.
Then
\[
    \func_0(\py) = \func_0\bigl(\px/\rt{\func_0(\px)}{k_0}\bigr)
                 = \func_0(\px)/\rt{\func_0(\px)}{k_0}^{k_0} = 1,
\]
i.e.\ $\py\in\partial\ATor_0$, so $\gxi$ is a well-defined map.
Moreover, since $\psi(\py)\in\partial\ATor_1$, $\func_1\bigl(\psi(\py)\bigr)=1$, and therefore
\begin{align*}
\func_1(\gxi(\px))
    &= \func_1\bigl(\rt{1-\func_0(\px)}{k_1} \cdot \psi(\py)\bigr)
     = \bigl(1-\func_0(\px)\bigr) \cdot \func_1\bigl(\psi(\py)\bigr) = 1-\func_0(\px).
% \qedhere
\end{align*}
Lemma is proved.
\end{proof}

\begin{example}\rm
Notice that even if $\partial\ATor_0$ and $\partial\ATor_1$ are diffeomorphic, the bases $\Bman_0$ and $\Bman_1$ may have distinct topological type.
The following example is inspired by surgery theory of manifolds.
Fix any $a,b\geq0$ and let $\vbp_0\colon S^{a}\times\bR^{b+1}\to S^{a}$ and $\vbp_1\colon S^{b}\times\bR^{a+1}\to S^{b}$ be trivial vector bundles over spheres $S^{a}$ and $S^{b}$.
Consider the following $2$-homogeneous functions $\func_0\colon S^{a}\times\bR^{b+1}\to\bR$ and $\func_1\colon S^{b}\times\bR^{a+1}\to\bR$ given by
\begin{align*}
    &\func_0(\px,\pui{1},\ldots,\pui{b+1}) = \sum_{i=1}^{b+1} \pui{i}^2, &
    &\func_1(\pv,\pvi{1},\ldots,\pvi{a+1}) = \sum_{i=1}^{a+1} \pvi{i}^2.
\end{align*}
Then both $\func_0^{-1}(1)$ and $\func_1^{-1}(1)$ are diffeomorphic with $S^{a}\times S^{b}$, though the bases $S^{a}$ and $S^{b}$ are not homeomorphic for $a\not=b$.
\end{example}

\begin{remark}\rm
Recall that each lens space $\Lman_{\gxi}$ is glued from two solid tori by some diffeomorphism $\gxi$ between their boundaries.
Though $\Lman_{\gxi}$ admits a smooth structure, such a definition has the following disadvantage: suppose we have a $\dif\colon\Lman_{\gxi}\to\Mman$ into some other manifold.
Therefore, even if we know that the restriction of $\dif$ on each tori $\ATor_i$ is $\Cinfty$, the entire map $\dif$ is not necessary even differentiable, and checking of its smoothness might be rather complicated.
For that reason, in what follows we will glue our analogues of lens spaces by diffeomorphism between open sets, as in Lemma~\ref{lm:properties_of_xi}.
\end{remark}

Suppose that there exists a diffeomorphism $\psi\colon\partial\ATor_0 \to \partial\ATor_1$.
Let $\PBLman = \AOTor_0 \cup_{\gxi} \AOTor_1$ be the space obtained by gluing $\AOTor_0$ with $\AOTor_1$ via the diffeomorphism $\gxi\colon\AOTor_0\setminus\Bman_0 \to \AOTor_1\setminus\Bman_1$ from Lemma~\ref{lm:properties_of_xi}.
Let also $\vbp\colon\BLman \to \PBLman$ be the corresponding quotient map and $\vbp_i:=\restr{\vbp}{\AOTor_i}\colon \AOTor_i \to \PBLman$ be the restriction maps.
Then it is evident that $\PBLman$ is a manifold, and each $\vbp_i$ is an open embedding.
One can regard the pair $\{\vbp_0,\vbp_1\}$ as a $\Cinfty$ atlas%
\footnote{\,Usually an atlas of a manifold $\Mman$ is a collection of open embeddings $\bR^{n} \supset \Uman_i \xrightarrow{\psi_i}\Mman$, $i\in\Lambda$, from open subsets of $\bR^{n}$ such that $\Mman = \cup_{i\in\Lambda} \psi_i(\Uman_i)$.
However, all the theory of manifolds will not be changed if one extends the notion of an atlas allowing each $\Uman_i$ to be an open subset of some $n$-manifold such that the corresponding transition functions are smooth maps.}
for $\PBLman$ with $\gxi$ being a transition map.
Define the following function
\[
   \hat{\func}\colon\BLman\to\UInt,
   \qquad
   \hat{\func}(\px) =
    \begin{cases}
        \func_0(\px),   & \px\in\AOTor_0,\\
        1-\func_1(\px), & \px\in\AOTor_1.
    \end{cases}
\]
It follows from~\eqref{equ:xi_leaf_to_leaf} that $\hat{\func}(\gxi(\px)) = \hat{\func}(\px)$ for all $\px\in\AOTor_{0}\setminus\Bman_0$, whence $\hat{\func}$ yields a well-defined $\Cinfty$ function $\func\colon\PBLman\to\UInt$ such that $\hat{\func}=\func\circ\vbp$.
It will be convenient to define the following diffeomorphism $\ophi\colon\UInt\to\UInt$, $\ophi(t)=1-t$.
Then we get the following commutative diagram:
\[
\xymatrix@R=1.2em@C=3em{
    \AOTor_0\setminus\Bman_0  \ar[dd]_-{\gxi}^{\cong} \ar@{^(->}[r] &
    \AOTor_0 \ar[r]^-{\func_0} \ar[d]_-{\vbp_0} &
    \UInt \ar@{=}[d]  \\
    & \PBLman \ar[r]^-{\func} & \UInt \\
    \AOTor_1\setminus\Bman_1  \ar@{^(->}[r] &
    \AOTor_1 \ar[u]^-{\vbp_1} \ar[r]^-{\func_1} &
    \UInt \ar[u]_-{\ophi}
}
\]

\begin{example}\label{ex:lens_spaces_case}\rm
If $\Eman_i = \Circle\times\bR^2\to\Circle$, $i=0,1$, are trivial vector bundles of rank $2$ over the circle, and the functions $\func_0=\func_1\colon\Circle\times\bR^2\to\bR$ coincide and are given by the formula $\func_0(\al,\px,\py) = \px^2+\py^2$, then the space $\PBLman$ is the same as lens space $\Lman_{\gxi}$, and one can assume that $\ATor_0 = \func^{-1}([0;\tfrac{1}{2}])$ and $\ATor_1 = \func^{-1}([\tfrac{1}{2};1])$.
In particular, Theorem~\ref{th:DiffLpq_fol_homtype} is a particular case of Theorem~\ref{th:DFLlp_DPlFLfol} below.
\end{example}

\begin{lemma}\label{lm:lens_gen:theta}
There exists a continuous homomorphism
\[
    \smprA\colon\DiffPlI\to\DiffPlAFol
\]
such that $\phi\circ\func = \func\circ\smprA(\dif)$ for all $\phi\in\DiffPlI$.
Therefore $\SMf$ is a strong deformation retract of $\SPlMAf$ with respect to the natural inclusion $\sincl\colon\SMf\subset\SMAf$.
\end{lemma}
\begin{proof}
By definition, $\func_i\colon\Eman\to\bR$, $i=0,1$, is a $k_i$-homogeneous $\Cinfty$ function, so it satisfies the property \JProp\ at each $\px\in\Bman_i$.
Moreover, $\func_0$ and $\func_1$ are local representations of $\func$ in the ``local chart'' $\AOTor_0$ and $\AOTor_1$, it follows that \term{$\func$ has property \JProp\ at each $\px\in\leaf{0}\cup\leaf{1}$}.
Now the result follows from Theorem~\ref{th:SMf_SMRf_he}.
However, we will give an explicit proof similar to the proof of Lemma~\ref{lm:sigma:k-homog}.

1) Let $\phi\in\DiffPlI$.
It will be convenient to put $\phi_0=\phi$.
As $\phi_0\bigl([0;1)\bigr)=[0;1)$, it follows from Lemma~\ref{lm:sigma:k-homog} that the map
\begin{align*}
    &\dif_0\colon\AOTor_0\to\AOTor_0, &
    &\dif_0(\px) = \rt{\gfunc_0(\func_0(\px))}{k_0}\,\px,
\end{align*}
is a diffeomorphism satisfying $\phi_0\circ\func_0 = \func_0\circ\dif_0$, where $\gfunc_0\colon\UInt\to(0;+\infty)$ is a unique $\Cinfty$ function such that $\phi_0(t) = \gfunc_0(t)t$.

Similarly, we have that $\phi\bigl((0;1]\bigr)=(0;1]$.
Consider another diffeomorphism
$\phi_1\in\DiffPlI$, $\phi_1(t) = \ophi\circ\phi\circ\ophi(t)=1-\phi(1-t)$.
Then $\phi_1\bigl([0;1)\bigr)=[0;1)$, whence, again by Lemma~\ref{lm:sigma:k-homog}, the map
\begin{align*}
    &\dif_1\colon\AOTor_1\to\AOTor_1, &
    &\dif_1(\px) = \rt{\gfunc_1(\func_1(\px))}{k_1}\,\px,
\end{align*}
is a diffeomorphism satisfying $\phi_1\circ\func_1 = \func_1\circ\dif_1$, where $\gfunc_1\colon\UInt\to(0;+\infty)$ is a unique $\Cinfty$ function such that $\phi_1(t) = \gfunc_1(t)t$.

We claim that
\begin{equation}\label{equ:gxi_h0__h1_gxi}
    \gxi\circ\dif_0(\px) = \dif_1\circ\gxi(\px), \quad \px\in\AOTor_0\setminus\Bman_0.
\end{equation}
This will imply that $\dif_0$ and $\dif_1$ yield a well-defined diffeomorphism
\begin{align*}
&\smprA(\phi)\colon\PBLman\to\PBLman,
&
&\smprA(\phi)(\px) =
\begin{cases}
\dif_0(\vbp_0^{-1}(\px)), & \px\in \vbp_0(\AOTor_0), \\
\dif_1(\vbp_1^{-1}(\px)), & \px\in \vbp_1(\AOTor_1).
\end{cases}
\end{align*}
Moreover, we will also have that $\phi\circ\func=\func\circ\smprA(\phi)$.

Before proving~\eqref{equ:gxi_h0__h1_gxi} let us establish several simple identities:
\begin{gather}
\label{equ:h0_dif_root_fh0}
\begin{aligned}
\frac{\dif_0(\px)}
{\rt{\func_0\circ\dif_0(\px)}{k_0}}
&=
\frac{\rt{\gfunc_0(\func_0(\px))}{k_0}\,\px}
{\rt{\func_0\bigl(\rt{\gfunc_0(\func_0(\px))}{k_0}\,\px\bigr)}{k_0}}
 \\ &=
\sqrt[k_0]{
    \frac{\gfunc_0(\func_0(\px))}
    {\rt{\gfunc_0(\func_0(\px))}{k_0}^{k_0}\func_0(\px)}
} \, \px
=
\frac{\px}{\rt{\func_0(\px)}{k_0}},
\end{aligned}\\[1mm]
%%%%%%%%%%
\label{equ:g1_q_f0__q_f0}
\begin{aligned}
(\gfunc_1\circ\ophi\circ\func_0(\px)) \cdot (\ophi\circ\func_0(\px)) &=
\phi_1\circ\ophi\circ\func_0(\px) \\ &=
\ophi\circ\phi_0\circ\func_0(\px) =
\ophi\circ\func_0\circ\dif_0(\px).
\end{aligned}
\end{gather}
Then
\begin{align*}
\dif_1\circ\gxi(\px) &=
\rt{\gfunc_1\circ\func_1\circ\gxi(\px)}{k_1}\cdot \gxi(\px)
\stackrel{\eqref{equ:xi_leaf_to_leaf},\eqref{equ:gxi_gluing_map_general}}{=\!=}\\
&= \rt{\gfunc_1\circ\ophi\circ\func_0(\px)}{k_1} \cdot
\rt{\ophi\circ\func_0(\px)}{k_1} \cdot \psi\bigl(x / \rt{\func_0(\px)}{k_0}\bigr)
\stackrel{\eqref{equ:g1_q_f0__q_f0}}{=} \\
&=\rt{\ophi\circ\func_0\circ\dif_0(\px)}{k_1} \cdot \psi\bigl(x / \rt{\func_0(\px)}{k_0}\bigr)
\stackrel{\eqref{equ:h0_dif_root_fh0}}{=} \\
&=\rt{\ophi\circ\func_0\circ\dif_0(\px)}{k_1}\cdot
\psi\bigl( \tfrac{\dif_0(\px)}{\rt{\func_0\circ\dif_0(\px)}{k_0}} \bigr)
\stackrel{\eqref{equ:gxi_gluing_map_general}}{=}
\gxi\circ\dif_0(\px).
\end{align*}
The correspondence $\phi\mapsto\smprA(\phi)$ is a homomorphism, since so is the correspondence $\phi_0\mapsto\dif_0$.
Continuity of $\smprA$ follows from the formulas for $\dif_0$ and $\dif_1$.

2) The proof that $\SMf$ is a strong deformation retract of $\SPlMAf$ is literally the same as in step 4) of the proof of Lemma~\ref{lm:sigma:k-homog}.
\end{proof}

For $t\in\UInt$ let $\leaf{t}:= \func^{-1}(t)$, and $\PAFoliation = \{ \leaf{t} \}_{t\in\UInt}$ be the foliation on $\BLman$ by level sets of $\func$.
Then $\leaf{t} = \vbp(\func_0^{-1}(t))$ for $t\in[0;1)$ and $\leaf{t} = \vbp(\func_1^{-1}(1-t))$ for $t\in(0;1]$.
In particular, $\leaf{0} = \vbp(\Bman_0)$ and $\leaf{1} = \vbp(\Bman_1)$ are ``singular leaves'', and each $\leaf{t}$, $t\in(0;1)$, is diffeomorphic with $\partial\ATor_0$.
Let $\DiffPlAFol$ be the group of $\PAFoliation$-foliated diffeomorphisms leaving invariant each $\leaf{0}$ and $\leaf{1}$.

\begin{lemma}\label{lm:lens_gen:sigma}
Suppose $\func_0$ and $\func_1$ are definite and $2$-homogeneous.
Then there is a unique continuous homomorphism $\smprM\colon\DiffAFol \to \DiffI$ such that $\smprM(\dif)\circ\func = \func\circ\dif$, i.e.\ $(\smprM(\dif),\dif) \in \SMAf$.

Also, $\dif\in\DiffPlAFol$ if and only if $\smprM(\dif)\in\DiffPlI$, and moreover, $\smprM(\DiffPlAFol)=\DiffPlI$.
Hence, if $\DiffPlAFol\not=\DiffAFol$, then $\smprM$ is surjective.
\end{lemma}
\begin{proof}
1) Let $\dif\in\DiffAFol$.
We should construct a diffeomorphism $\phi\colon\UInt\to\UInt$ satisfying $\phi\circ\func=\func\circ\dif$ and then put $\smprM(\dif)$.
As $\dif$ is $\AFoliation$-foliated, for each $t\in\UInt$ there exists a unique $t'\in\UInt$ such that $\dif(\leaf{t})=\leaf{t'}$.
Then, by condition~\ref{enum:LR3}, $\phi$ must be defined by $\phi(t)=t'$.
In particular, $\phi$ is uniquely determined by $\dif$, and we need to check that $\phi$ is a diffeomorphism.
Consider two cases.

a) First assume that $\dif\in\DiffPlAFol$, i.e.\ $\dif(\leaf{i})=\leaf{i}$ for $i=0,1$.
Then $\vbp(\AOTor_i) = \PBLman\setminus\leaf{1-i}$ is also invariant under $\dif$.
Hence $\dif$ yields a $\AFoliation_{i}$-foliated diffeomorphism
\[
    \dif_i= \vbp_i\circ\dif\circ\vbp_i\colon
    \AOTor_i\xrightarrow{\vbp_i} \vbp(\AOTor_i) \xrightarrow{\dif} \vbp(\AOTor_i) \xrightarrow{\vbp_i^{-1}}\AOTor_i, \quad i=0,1,
\]
such that $\dif_1\circ\gxi = \gxi\circ\dif_0$.
Then by Lemma~\ref{lm:F0F1_folated}, there exists a unique diffeomorphism $\phi_i\colon[0;1)\to[0;1)$ such that $\phi_i\circ\func_i=\func_i\circ\dif_i$.
Thus we get the following commutative diagram:
\[
\xymatrix@R=1.2em@C=4em{
    & &   \ar@{-->}@(l,l)[ddd]^{\phi} \UInt  \ar@{-->}@(r,r)[ddd]_{\phi} \\
    \UInt \ar[d]_-{\phi_0} \ar@/^1em/@{=}[urr] &
    \AOTor_0 \ar[d]^-{\dif_0} \ar[r]^-{\vbp_0} \ar[l]_-{\func_0} &
    \PBLman \ar[d]^-{\dif} \ar[u]^-{\func} &
    \AOTor_1 \ar[l]_-{\vbp_1} \ar[d]^-{\dif_1} \ar[r]^-{\func_1} &
    \UInt \ar[d]^-{\phi_1} \ar@/_1em/[ull]^-{\ophi}
    \\
    \UInt  \ar@/_1em/@{=}[drr] &
    \AOTor_0 \ar[r]^-{\vbp_0} \ar[l]_-{\func_0} &
    \PBLman  \ar[d]^-{\func}  &
    \AOTor_1 \ar[l]_-{\vbp_1}  \ar[r]^-{\func_1} &
    \UInt \ar@/^1em/[dll]_-{\ophi} \\
    & & \UInt
}
\]
It implies that $\phi_0=\ophi\circ\phi_1\circ\ophi^{-1}$, i.e.\ $\phi_0(t)=1-\phi_1(1-t)$ for $t\in(0;1)$.
Therefore, we get a well defined diffeomorphism $\phi\in\DiffPlI$ given by either of the formulas:
\begin{equation}\label{equ:phi0__1_phi1}
    \phi(t) =
    \begin{cases}
        \phi_0(t),        & t\in[0;1), \\
        1 - \phi_1(1-t),  & t\in(0;1]
    \end{cases}
\end{equation}
and satisfying $\phi\circ\func=\func\circ\dif$.

b) Suppose that $\dif(\leaf{i})=\leaf{1-i}$ for $i=0,1$.
Then $\dif$ yields a $(\AFoliation_{i},\AFoliation_{1-i})$-foliated diffeomorphism
\[
    \dif_i= \vbp_{1-i}\circ\dif\circ\vbp_i\colon
    \AOTor_i\xrightarrow{\vbp_i} \vbp(\AOTor_i) \xrightarrow{\dif} \vbp(\AOTor_{1-i}) \xrightarrow{\vbp_{1-i}}\AOTor_{1-i},
    \quad
    i=0,1,
\]
such that $\gxi^{-1}\circ\dif_0 = \dif_1\circ\gxi$.
Therefore, by Lemma~\ref{lm:F0F1_folated}, there exists a diffeomorphism $\phi_i\colon[0;1)\to[0;1)$ such that $\phi_i\circ\func_i=\func_{1-i}\circ\dif_i$.
Thus we get the following commutative diagram:
\[
\xymatrix@R=1.2em@C=4em{
    & & \ar@{-->}@(l,l)[ddd]^{\phi} \UInt  \ar@{-->}@(r,r)[ddd]_{\phi} \\
    \UInt \ar[d]_-{\phi_0} \ar@/^1em/@{=}[urr] &
    \AOTor_0 \ar[d]^-{\dif_0} \ar[r]^-{\vbp_0} \ar[l]_-{\func_0} &
    \PBLman  \ar[d]^-{\dif} \ar[u]^{\func} &
    \AOTor_1 \ar[l]_-{\vbp_1}  \ar[d]^-{\dif_1} \ar[r]^-{\func_1} &
    \UInt \ar[d]^-{\phi_1} \ar@/_1em/[ull]^-{\ophi}
    \\
    \UInt  \ar@/_1em/[drr]_-{\ophi} &
    \AOTor_1 \ar[r]^-{\vbp_1} \ar[l]_-{\func_1} &
    \PBLman  \ar[d]^-{\func}  &
    \AOTor_0 \ar[l]_-{\vbp_0} \ar[r]^-{\func_0} &
    \UInt  \ar@{=}@/^1em/[dll] \\
    & & \UInt
}
\]
In particular, $\ophi\circ\phi_0 = \phi_1\circ\ophi$, i.e.\ $1 - \phi_0(t) = \phi_1(1-t)$ for $t\in(0;1)$.
Therefore, we get a well defined reversing orientation diffeomorphism $\phi\in\DiffI$ given by
\begin{equation}\label{equ:1_phi0__phi1}
    \phi(t) =
    \begin{cases}
        \ophi\circ\phi_0(t) = 1 - \phi_0(t),  & t\in[0;1), \\
        \phi_1\circ\ophi(t) = \phi_1(1-t),    & t\in(0;1].
    \end{cases}
\end{equation}
and satisfying $\phi\circ\func=\func\circ\dif$.

2) Due to Lemma~\ref{lm:F0F1_folated} and formulas for $\phi$, the correspondence $\dif\mapsto\phi$ is a well-defined continuous map $\smprM\colon\DiffAFol\to\DiffI$, $\smprM(\dif)=\phi$.
Moreover, as in~\eqref{equ:sigma_unique}, uniqueness of $\smprM$ implies that $\smprM$ is a homomorphism.
Also, by the construction $\smprM(\dif)\in\DiffPlI$ if and only if $\dif\in\DiffPlAFol$.

Further notice, that due to Lemma~\ref{lm:lens_gen:theta}, $\smprM(\smprA(\phi))=\phi$ for all $\phi\in\DiffPlI$.
Indeed, we have that $\phi\circ\func = \func\circ\smprA(\phi)$, and $\smprM(\smprA(\phi))\circ\func=\func\circ\smprA(\phi)$.
Then by uniqueness of $\smprM$ we should have that $\smprM(\smprA(\phi))=\phi$.
This implies that $\smprM(\DiffPlAFol)=\DiffPlI$.

Finally, suppose that $\DiffPlAFol\not=\DiffAFol$.
Then $\smprM$ must map the unique adjacent class $\DiffAFol\setminus\DiffPlAFol$ of $\DiffAFol$ by $\DiffPlAFol$, onto the unique adjacent class $\DiffI\setminus\DiffPlI$ of $\DiffI$ by $\DiffPlI$.
Hence $\smprM$ is surjective.
\end{proof}

\begin{theorem}\label{th:DFLlp_DPlFLfol}
Suppose $\func_0$ and $\func_1$ are definite and $2$-homogeneous.
Then the projection $\prM\colon\SMAf\to\DiffAFol$ induces an isomorphism of the following short exact sequences:
\begin{equation}\label{equ:gen_lens:diag_rel_Stabs:ext_plus}
\begin{gathered}
\xymatrix{
1 \ar[r] &
\SMf \ar[r]^-{\sincl} \ar@{=}[d] &
\SPlMAf  \ar[r]^-{\prA}           \ar@/_1ex/[d]_-{\prM} &
\DiffPlI \ar@/^1ex/[l]^-{\widehat{\smprA}}  \ar@{=}[d]            \ar[r] &  1 \\
1 \ar[r] &
\DiffAFolLp  \ar@{^(->}[r] &
\DiffPlAFol \ar@/_1ex/[u]_-{\widehat{\smprM}} \ar[r]^-{\smprM} &
\DiffPlI \ar@/^1ex/[l]^-{\smprA} \ar[r] & 1
}
\end{gathered}
\end{equation}
where $\widehat{\smprM}(\dif) = (\smprM(\dif),\dif)$ is the inverse to $\prM$, and $\widehat{\smprA}(\rphi) = (\rphi, \smprA(\rphi))$ is the inverse to $\prA$.
In particular, $\DiffAFolLp$ is a strong deformation retract of $\DiffPlAFol$.

Moreover, if $\DiffPlAFol \not= \DiffAFol$, then we have isomorphism of another pair of short exact sequences:
\begin{equation}\label{equ:gen_lens:diag_rel_Stabs:ext}
\begin{gathered}
\xymatrix{
1 \ar[r] &
\SMf \ar[r]^-{\sincl} \ar@{=}[d] &
\SMAf  \ar[r]^-{\prA}           \ar@/_1ex/[d]_-{\prM} &
\DiffI   \ar@{=}[d]            \ar[r] &  1 \\
1 \ar[r] &
\DiffAFolLp  \ar@{^(->}[r] &
\DiffAFol \ar@/_1ex/[u]_-{\widehat{\smprM}} \ar[r]^-{\smprM} &
\DiffI \ar[r] & 1
}
\end{gathered}
\end{equation}
\end{theorem}
\begin{proof}
Evidently, $\prM\circ\widehat{\smprM}(\dif)=\dif$ for all $\dif\in\DiffAFol$, in particular, $\prM$ is surjective and maps $\SPlMAf$ onto $\DiffPlAFol$.
Since it is also injective (see Lemma~\ref{lm:DAM_DM}), it follows that $\prM$ and $\widehat{\smprM}$ are mutually inverse isomorphisms of topological groups.
Moreover, again by Lemma~\ref{lm:DAM_DM}, $\SMf=\DiffAFolLp$ and $\prM\circ\sincl = \id_{\SMf}$.
Hence $\prM$ yields isomorphisms of the corresponding short exact sequences in~\eqref{equ:gen_lens:diag_rel_Stabs:ext_plus} and~\eqref{equ:gen_lens:diag_rel_Stabs:ext}.

By Lemma~\ref{lm:lens_gen:theta}, $\SMf$ is a strong deformation retract of $\SMAf$ with respect to the inclusion $\sincl\colon\SMf\subset\SMAf$, whence $\DiffAFolLp$ is a strong deformation retract of $\DiffPlAFol$.
\end{proof}

\section{Solid torus}\label{sect:main_result:solid_torus}
Let $\AFoliation$ be the foliation on $\ATor = \Circle\times D^2$ into the central circle and parallel tori, as in Section~\ref{sect:main_result:all}.
Then, by Theorem~\ref{th:Dlp_Dfol_he}, $\bigl(\DiffAFolLp, \ \FolLpDiffdATor\bigr)$ is a strong deformation retract of the pair $\bigl(\DiffAFol, \ \FolDiffdATor\bigr)$.
In this section we will compute the homotopy types of all these groups, see Theorem~\ref{th:hom_type_solid_torus_groups}.

Consider the following subgroup of $\GL(2,\bZ)$ consisting of matrices for which the vector $\avect{0}{1}$ is eigen with eigen value $\pm 1$:
\begin{equation}\label{equ:group_A}
    \MPGSolidTorus:=\left\{ \amatr{\eps}{0}{m}{\delta} \mid m\in\bZ, \, \eps,\delta\in\{\pm1\} \right\}.
\end{equation}
Then $\MPGSolidTorus$ is generated by the matrices:
\begin{align}\label{equ:dlmt}
    \mDtwist  &= \amatr{1}{0}{1}{1},  &
    \mLambda  &= \amatr{-1}{0}{0}{1}, &
    \mMu      &= \amatr{1}{0}{0}{-1} = -\hLambda, &
    \mTau     &= \amatr{-1}{0}{0}{-1},
\end{align}
satisfying the following identities:
\begin{align}\label{equ:dlmt_relations}
    &\mLambda^2 = \mMu^2 = E, &
    &\mLambda\mDtwist\mLambda = \mMu\mDtwist\mMu = \mDtwist^{-1}, &
    &\mTau = \mLambda\mMu = \mMu\mLambda, &
    &\mTau\mDtwist=\mDtwist\mTau.
\end{align}
Hence, $\MPGSolidTorus = \langle \mDtwist, \mLambda \rangle \times  \langle \mTau\rangle \cong \Dih(\bZ) \times \bZ_2$.
It is easy to check that every $\mAmatr=\amatr{\eps}{0}{m}{\delta}\in\MPGSolidTorus$ yields the following diffeomorphism $\diftor{\mAmatr}\in\DiffAFolLp$,
\begin{equation}\label{equ:g_A}
    \diftor{\mAmatr}(\al,\az) =
    \begin{cases}
        \bigl( \al^{\eps}, \al^{m} \az \bigr),       & \text{if} \ \delta = 1, \\
        \bigl( \al^{\eps}, \al^{m} \bar{\az} \bigr), & \text{if} \ \delta = -1,
    \end{cases}
\end{equation}
so that the correspondence $\mAmatr \mapsto \diftor{\mAmatr}$ is a monomorphism $\MPGSolidTorus\subset\DiffAFolLp$, and we will identify $\MPGSolidTorus$ with its image in $\DiffAFolLp$.
It will be convenient to denote $\hDtwist := \diftor{\mDtwist}$, $\hLambda := \diftor{\mLambda}$, $\hMu := \diftor{\mMu}$, $\hTau := \diftor{\mTau}$, so
\begin{equation}\label{equ:dlmt_diffs}
\begin{aligned}
    \hDtwist(\al,\az)   &= (\al, \al\az),         &
    \hLambda(\al,\az)   &= (\bar{\al}, \az),      \\
    \hMu(\al,\az)       &= (\al,\bar{\az}),       &
    \hTau(\al,\az)      &= (\bar{\al},\bar{\az}).
\end{aligned}
\end{equation}
Further note that $\DiffAFolLp$ contains another subgroup $\RotSub$, called the \term{rotation subgroup}, isomorphic to the $2$-torus $\Circle\times\Circle$ and consisting of diffeomorphisms $\hRho{\alpha,\beta}$, $(\alpha,\beta)\in \Circle\times\Circle$, given by
\begin{equation}\label{equ:incl:rotsubgr}
    \hRho{\alpha,\beta}(\al,\az) = (\alpha\al, \beta\az), \qquad (\al,\az)\in \ATor = \Circle\times D^2.
\end{equation}

Let $\AffDiffATor = \langle \RotSub, \MPGSolidTorus\rangle$ be the subgroup of $\DiffAFolLp$ generated by $\RotSub$ and $\MPGSolidTorus$.
Evidently, $\RotSub$ is the identity path component of $\AffDiffATor$, whence $\pi_0\AffDiffATor \cong \MPGSolidTorus$.

\begin{theorem}\label{th:hom_type_solid_torus_groups}
In the following diagrams of inclusions the arrows denoted by (w.)h.e. are (weak) homotopy equivalences (the new statements are written in bold):
\begin{align*}
&\xymatrix@R=1.5em{
    \{\id_{\ATor}\} \ar@{^(->}[rr]^-{w.h.e} \ar@{^(->}[d]_-{h.e.} &&
    \FolLpDiffdATor  \ar@{_(->}[d]^-{\textbf{h.e}} \\
    \DiffdATor && \FolDiffdATor \ar@{_(->}[ll]_{\textbf{w.h.e}}
}
&&
\xymatrix@R=1.5em{
    \AffDiffATor \ar@{^(->}[rr]^-{w.h.e} \ar@{^(->}[d]_-{h.e.} &&
    \DiffAFolLp  \ar@{_(->}[d]^-{\textbf{h.e}} \\
    \DiffATor && \DiffAFol \ar@{_(->}[ll]_{\textbf{w.h.e}}
}
\end{align*}
\end{theorem}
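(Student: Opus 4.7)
The plan is to deduce all four new (bold) arrows from Theorem~\ref{th:Dlp_Dfol_he} combined with Ivanov's theorem on $\DiffdATor$ and the computations in \cite{KhokhliukMaksymenko:lens:2022} concerning $\FolLpDiffdATor$ and $\DiffAFolLp$; no further geometry is required once Theorem~\ref{th:Dlp_Dfol_he} is in hand, and the argument reduces to a diagram chase.

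First, I would observe that the two vertical bold arrows, namely the inclusions $\FolLpDiffdATor \hookrightarrow \FolDiffdATor$ and $\DiffAFolLp \hookrightarrow \DiffAFol$, are essentially contained in Theorem~\ref{th:Dlp_Dfol_he}: a strong deformation retract of a pair yields a homotopy equivalence on both members of that pair, so both inclusions are genuine (not just weak) homotopy equivalences.

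Next, for the horizontal bold arrows I would apply the 2-out-of-3 property of weak homotopy equivalences to the commutative chains
\[
    \{\id_\ATor\} \hookrightarrow \FolLpDiffdATor \hookrightarrow \FolDiffdATor \hookrightarrow \DiffdATor,
    \qquad
    \AffDiffATor \hookrightarrow \DiffAFolLp \hookrightarrow \DiffAFol \hookrightarrow \DiffATor.
\]
In the first chain, the total composition is Ivanov's homotopy equivalence $\{\id_\ATor\}\hookrightarrow\DiffdATor$, the first arrow is a weak homotopy equivalence by \cite{KhokhliukMaksymenko:lens:2022}, and the second is a homotopy equivalence by the preceding vertical step; hence $\FolDiffdATor\hookrightarrow\DiffdATor$ must be a weak homotopy equivalence. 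In the second chain, the total composition is the known homotopy equivalence $\AffDiffATor\hookrightarrow\DiffATor$, the first arrow is a weak homotopy equivalence by \cite{KhokhliukMaksymenko:lens:2022}, and the second is a homotopy equivalence by the vertical step, so the last inclusion $\DiffAFol\hookrightarrow\DiffATor$ is likewise a weak homotopy equivalence.

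The argument itself offers no real obstacle once Theorem~\ref{th:Dlp_Dfol_he} is available; the only delicate point is bookkeeping, since the vertical improvements are genuine homotopy equivalences whereas the horizontal ones are only weak, reflecting the weak contractibility (rather than contractibility) of the leaf-preserving groups. All substantive work has been shouldered upstream by Theorem~\ref{th:Dlp_Dfol_he} together with the computations of \cite{Ivanov:ST:1982, KhokhliukMaksymenko:lens:2022}.
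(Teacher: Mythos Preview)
Your proposal is correct and follows essentially the same approach as the paper's own proof: the vertical bold arrows come directly from Theorem~\ref{th:Dlp_Dfol_he}, and the horizontal bold arrows are deduced by a 2-out-of-3 argument from the known (weak) homotopy equivalences along the other three sides of each square. The paper's proof is terser (it simply says ``This implies that the lower arrows are weak homotopy equivalences as well''), but your explicit chain decomposition is exactly what underlies that sentence.
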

\begin{proof}
The proof that the inclusion $\{\id_{\ATor}\} \subset \DiffdATor$ is a homotopy equivalence, i.e.\ contractibility of $\DiffdATor$, is given in~\cite[Theorem~2]{Ivanov:ST:1982}.
The fact, that the inclusion $\AffDiffATor \subset \DiffATor$ is a homotopy equivalence, is classical and follows from contractibility of $\DiffdATor$, see~\cite{KhokhliukMaksymenko:JHRS:2023} for exposition of those results.
Also for the proof that the corresponding induced map $\pi_0\AffDiffATor = \MPGSolidTorus \to \pi_0\DiffATor$ is an isomorphism see e.g.~\cite[Theorem~14]{Wajnryb:FM:1998}.
The upper horizontal weak homotopy equivalences are established in~\cite{KhokhliukMaksymenko:JHRS:2023}, and the right vertical homotopy equivalence are contained in Theorem~\ref{th:Dlp_Dfol_he}.
This implies that the lower arrows are weak homotopy equivalences as well.
\end{proof}

\section{Lens spaces}\label{sect:lens_spaces}
In this section we will compute the groups~\eqref{equ:diag:Lpq_all_groups:line} for all lens spaces, see Theorem~\ref{th:DiffLpq_fol_homtype}.
We assume that the reader is familiar with lens spaces and will briefly recall their basic properties, see e.g.~\cite{Reidemeister:AMSUH:1935, Brody:AM:1960, Bonahon:Top:1983, Gadgil:GT:2001, HongKalliongisMcCulloughRubinstein:LMN:2012}.

Let $\Lman:=\ATor\times\{0,1\}$ be two copies of $\ATor$ and $\xi\colon\partial\ATor\to\partial\ATor$ a diffeomorphism.
Identify each point $(\px,0)$ with $(\gxi(\px),1)$ and denote the obtained quotient space by $\Lman_{\gxi}$.
Let also $\vbp\colon\Lman \to \Lman_{\gxi}$ be the corresponding quotient map, $\ATor_i := \vbp(\ATor\times\{i\})$, $i=0,1$, and $\Cman_i := \Circle\times 0 \times i$ be the ``central circle'' of the torus $\ATor_i$.
It is well known that $\Lman_{\gxi}$ admits a smooth structure, and if $\gxi'\colon\partial\ATor\to\partial\ATor$ is another diffeomorphism satisfying either of the following conditions:
\begin{itemize}
\item $\gxi$ is isotopic to $\gxi'$;
\item $\gxi'=\restr{\gfunc}{\partial\ATor} \circ\gxi \circ\restr{\dif}{\partial\ATor}$, where $\gfunc,\dif$ are certain diffeomorphisms of $\ATor$;
\item $\gxi'=\gxi^{-1}$;
\end{itemize}
then $\Lman_{\gxi}$ and $\Lman_{\gxi'}$ are diffeomorphic.
This finally implies that one can always assume that $\gxi$ is defined by the formula:
\begin{equation}\label{equ:phi_pq}
\gxi(\al,\az)  = (\al^r \az^p, \al^s \az^q),  \quad (\al,\az)\in\partial\ATor,
\end{equation}
for some matrix $\mXimatr = \amatr{r}{p}{s}{q} \in\GL(2,\bZ)$ with $\nrm{\mXimatr} = rq-ps = -1$, and in this case $\Lman_{\gxi}$ will be denoted by $\Lpq{p}{q}$.
Moreover, if $\gxi'\colon\partial\ATor\to\partial\ATor$ is given by the matrix $\mXimatr'=\amatr{r'}{p}{s'}{q'}$ with $r=r'(\bmod\ p)$ and $q=q'(\bmod\ p)$, then $\Lman_{\gxi}$ and $\Lman_{\gxi'}$ are still diffeomorphic.
This implies that for $p=0,1,2$ there exists a unique (up to a diffeomorphism) lens space so that
\begin{itemize}
\item
$\Lpq{0}{1} \cong S^1\times S^2$ with $\mXimatr = \mLambda = \amatr{-1}{0}{0}{1}$ and $\gxi(\al,\az)=\hLambda(\al,\az)=(\bar{\al},\az)$;

\item
$\Lpq{1}{0} \cong S^3$ with $\mXimatr = \amatr{0}{1}{1}{0}$ and $\gxi(\al,\az)=(\az,\al)$;

\item
$\Lpq{2}{1} \cong \bR P^3$ with $\mXimatr = \amatr{1}{2}{1}{1}$ and $\gxi(\al,\az)=(\al\az^2,\al\az)$ or with $\mXimatr'=\mXimatr\mDtwist^{-1} = \amatr{-1}{2}{0}{1}$ and $\gxi'(\al,\az)=(\bar{\al}\az^2,\az)$.
\end{itemize}
For $p>2$ one can assume that $1\leq q < p$, $\mathrm{gcd}(p,q)=1$, and $q$ can be replaced with $q'$ such that $qq'=1(\bmod\ p)$.

\subsection*{Subgroups $\ADiffLpq{p,q}$ and $\AFolDiffLpq{p,q}$ of $\Diff(\Lpq{p}{q})$}
We will now recall the definition of a certain subgroup $\ADiffLpq{p,q} \subset \Diff(\Lpq{p}{q})$ considered in~\cite{KhokhliukMaksymenko:JHRS:2023}, and also define some other group $\AFolDiffLpq{p,q}$ containing $\ADiffLpq{p,q}$, see item~\ref{enum:Lpq_groups} below.
They will play the principal role in Theorem~\ref{th:DiffLpq_fol_homtype}.
\begin{enumerate}[wide, label={\rm\arabic*)}, itemsep=1ex]
\item
First note that every diffeomorphism $\phi\in\Diff(\Lman_{\gxi})$ which preserves the common boundary $\partial\ATor_0=\partial\ATor_1$ induces a diffeomorphism $\widehat{\phi}\colon\Lman\to\Lman$ such that $\vbp\circ\widehat{\phi} = \phi\circ\vbp\colon\Lman\to\Lman_{\gxi}$.

a) Moreover, if $\phi$ preserves each torus $\ATor_i$, $i=0,1$, then $\restr{\widehat{\phi}}{\partial\ATor\times\{i\}}(\px,i) = (\phi_i(\px),i)$ for unique diffeomorphisms $\phi_0,\phi_1\colon\ATor\to\ATor$ satisfying the identity $\restr{\gxi\circ\phi_0}{\partial\ATor} = \phi_1\circ\gxi\colon\partial\ATor\to\partial\ATor$, i.e. making commutative the diagram (a) in~\eqref{equ:commuting_with_xi}:
\begin{equation}\label{equ:commuting_with_xi}
\begin{gathered}
\begin{array}{ccc}
\xymatrix@R=1.5em{
\partial\ATor \ar[d]_-{\gxi} \ar[r]^-{\phi_0} &
\partial\ATor \ar[d]^-{\gxi} \\
\partial\ATor \ar[r]^-{\phi_1} &
\partial\ATor
}
&\qquad\qquad&
\xymatrix@R=1.5em{
\partial\ATor \ar[d]_-{\gxi} \ar[r]^-{\phi_0} &
\partial\ATor  \\
\partial\ATor \ar[r]^-{\phi_1} &
\partial\ATor  \ar[u]_-{\gxi}
} \\
(a) &&
(b)
\end{array}
\end{gathered}
\end{equation}
It will be convenient to write down $\phi$ as follows: $\diflpdir{\phi}{\phi_0}{\phi_1}$.
Evidently, $\diflpdir{\phi^{-1}}{\phi_0^{-1}}{\phi_1^{-1}}$, and if $\phi\in\FolLpDiffLpq{p}{q}$, then $\phi_0,\phi_1\in\DiffAFolLp$.

b) Similarly, if $\phi$ exchanges $\ATor_0$ and $\ATor_1$, then $\restr{\widehat{\phi}}{\partial\ATor\times\{i\}}(\px,i) = (\phi_i(\px),1-i)$ for unique diffeomorphisms $\phi_0,\phi_1\colon\ATor\to\ATor$ satisfying the identity $\restr{\phi_0}{\partial\ATor} = \gxi \circ \phi_1\circ\gxi\colon\partial\ATor\to\partial\ATor$, i.e. making commutative the diagram (b) in~\eqref{equ:commuting_with_xi}.
In that case we will write down $\phi$ as follows: $\diflprew{\phi}{\phi_0}{\phi_1}$.
Notice that then $\diflprew{\phi^{-1}}{\phi_1^{-1}}{\phi_0^{-1}}$.

\item
For $(\alpha,\beta)\in\Circle\times\Circle$ let $\hRho{\alpha,\beta}\colon\ATor\to\ATor$, $\hRho{\alpha,\beta}(\al,\az) = (\alpha\al, \beta\az)$, be the diffeomorphism given by~\eqref{equ:incl:rotsubgr}.
Evidently,
\[
    \gxi\circ \hRho{\alpha,\beta}(\al,\az) = (\alpha^{r}\beta^{p}\al^r\az^p,\alpha^{s}\beta^{q}\al^s\az^q)=\hRho{\gxi(\alpha,\beta)} \circ \gxi(\al,\az),
\]
and we have the following diffeomorphism $\diflpdir[3em]{\lRot{\alpha,\beta}}{ \hRho{\alpha,\beta} }{ \hRho{\gxi(\alpha,\beta)} }$ of $\Lpq{p}{q}$, called a \term{rotation}.
Then the group $\RotSub = \{ \hRho{\alpha,\beta} \mid \alpha,\beta\in\Circle\} \subset \Diff(\Lpq{p}{q})$ of all rotations is isomorphic to $\Circle\times\Circle=\partial\ATor$.

\item
We will recall now the definition of two diffeomorphisms $\lSigmaPl$ and $\lSigmaMin$ exchanging $\ATor_0$ and $\ATor_1$.
F.~Bonahon~\cite[Theorem~1]{Bonahon:Top:1983} proved that every diffeomorphism $\dif$ of $\Lpq{p}{q}$ is isotopic to a diffeomorphism which leaves the common boundary $\ATor_0$ and $\ATor_1$ invariant, and thus preserving or exchanging those tori.
Moreover, for $p>2$ the group $\pi_0\Diff(\Lpq{p}{q})$ is generated by the isotopy classes of the diffeomorphism $\diflpdir{\lTau}{\hTau}{\hTau}$, and $\lSigmaPl$, $\lSigmaMin$ (more precisely by that $\lSigmaMin$ or $\lSigmaPl$ which is defined for the given $(p,q)$).

\begin{enumerate}[wide, label={\rm\alph*)}, itemsep=1ex]
\item
Suppose $-q^2 - ps = -1$ for some $s\in\bZ$, so $\gxi$ is given by the matrix $\mXimatr=\amatr{-q}{p}{s}{q}$.
This property holds when $\Lpq{p}{q}$ is either $\Lpq{0}{1}=S^1\times S^2$, or $\Lpq{1}{0}=S^3$, or $\Lpq{2}{1}=\RP{3}$, or $q^2=1(\bmod\ p)$ for $p>2$.
Notice that $\mXimatr^2=E$, so $\gxi^2=\id_{\partial\ATor}$, i.e.\ $\id_{\partial\ATor} = \gxi\circ\id_{\partial\ATor}\circ\gxi$, and we have a well defined diffeomorphism $\diflprew{\lSigmaPl}{\id_{\ATor}}{\id_{\ATor}}$ of $\Lpq{p}{q}$ exchanging $\ATor_0$ and $\ATor_1$.
One easily checks that $\lSigmaPl$ preserves orientation and $\lSigmaPl^2=\id_{\Lpq{p}{q}}$.

\item
Suppose $q^2 - ps = -1$ for some $s\in\bZ$, so $\gxi$ is given by the matrix $\mXimatr=\amatr{q}{p}{s}{q}$.
This holds if $\Lpq{p}{q}$ is either $\Lpq{1}{0}=S^3$, or $\Lpq{2}{1}=\RP{3}$ or if $q^2=-1(\bmod\ p)$ for $p>2$.
One easily checks that $\mLambda = \mXimatr\mMu\mXimatr$, so $\hLambda=\gxi\circ\hMu\circ\gxi$, and we have another diffeomorphism $\diflprew{\lSigmaMin}{\hLambda}{\hMu}$ of $\Lpq{p}{q}$ exchanging $\ATor_0$ and $\ATor_1$.
One easily checks that $\lSigmaMin$ reverses orientation, and is periodic of period $4$.
\end{enumerate}

\item\label{enum:Lpq_groups}
We will now define the group $\ADiffLpq{p,q}$ and put $\AFolDiffLpq{p,q}:=\langle\ADiffLpq{p,q},\lSigmaMin,\lSigmaPl\rangle$ to be the group generated by
$\ADiffLpq{p,q}$ and those of $\lSigmaMin$ and $\lSigmaPl$ which are defined for the given values $(p,q)$.

Let $\hDtwist,\hLambda,\hMu,\hTau=\hLambda\circ\hMu\in\DiffAFolLp$ be diffeomorphisms of $\ATor$ given by~\eqref{equ:dlmt_diffs} and corresponding to matrices~\eqref{equ:dlmt}.
\begin{enumerate}[wide, label={\rm\alph*)}, itemsep=1ex]
\item
Let $\Lpq{p}{q} = \Lpq{0}{1} = S^1\times S^2$, so $\mXimatr=\mLambda=\amatr{-1}{0}{0}{1}$.
Then it follows from~\eqref{equ:dlmt_relations} that $\Diff(\Lpq{0}{1})$ contains the following diffeomorphisms:
\begin{align*}
    & \diflpdir{\lDtwist}{\hDtwist}{\hDtwist^{-1}},
    &&\diflpdir{\lLambda}{\hLambda}{\hLambda},
    &&\diflpdir{\lMu}{\hMu}{\hMu},
    &&\diflpdir{\lTau=\lLambda\circ\lMu}{\hTau}{\hTau}.
\end{align*}
Notice that in this case only $\diflprew{\lSigmaMin}{\hLambda}{\hMu}$ is defined.
Define the following groups
\begin{align*}
    &\ADiffLpq{0,1} := \langle \RotSub, \lDtwist, \lLambda, \lMu \rangle \cong \langle \RotSub, \MPGSolidTorus\rangle = \AffDiffATor,
    &&\AFolDiffLpq{0,1} := \langle \ADiffLpq{p,q}, \lSigmaMin \rangle.
\end{align*}
Then $\RotSub$ is the identity path component of each of them.
Moreover, one also easily checks that
\[
    \diflprew[2em]{\lSigmaMin\circ\lDtwist=\lDtwist^{-1}\circ\lSigmaMin}{\hLambda\hDtwist}{\hMu\hDtwist^{-1}}
\]
and $\lSigmaMin$ commutes with $\lLambda$ and $\lMu$.
Then together with the first and second identities in~\eqref{equ:dlmt_relations} we get that
\begin{align*}
    &\pi_0\ADiffLpq{0,1} =  \MPGSolidTorus,
    &&\pi_0\AFolDiffLpq{0,1} = \langle \MPGSolidTorus, \lSigmaMin \rangle
    \cong
    \langle \lDtwist \rangle \rtimes
    \langle \lLambda,  \lMu,  \lSigmaMin \rangle
    \cong
    \bZ\rtimes(\bZ_2)^3,
\end{align*}
where the semidirect product corresponds to the homomorphism $(\bZ_2)^3 \to \Aut(\bZ) = \bZ_2$, $(a,b,c) \cdot n = (-1)^{a+b+c}n$, for $a,b,c\in\bZ_2$ and $n\in\bZ$.

\item
Let $\Lpq{p}{q} = \Lpq{1}{0} = S^3$, so $\mXimatr = \amatr{0}{1}{1}{0}$.
Then $\mXimatr^2=E$, $\mXimatr\mLambda=\mMu\mXimatr$, $\mXimatr\mMu=\mLambda\mXimatr$, which implies that $\FolLpDiffLpq{1}{0}$ contains the following diffeomorphisms of order two $\diflpdir{\lLambda}{\hLambda}{\hMu}$, $\diflpdir{\lMu}{\hMu}{\hLambda}$.
In this case both $\diflprew{\lSigmaMin}{\hLambda}{\hMu}$ and $\diflprew{\lSigmaPl}{\id_{\ATor}}{\id_{\ATor}}$ are defined, and one easily checks that
\begin{align*}
    &  \lSigmaPl^2=\id_{\Lpq{1}{0}},
    && \lSigmaMin^4=\id_{\Lpq{1}{0}},
    && \lSigmaMin^2 = \lLambda\lMu,
    && \lSigmaPl\lSigmaMin\lSigmaPl=\lSigmaMin^{-1}, \\
     & \lSigmaPl\lSigmaMin = \lLambda,
    && \lSigmaPl\lSigmaMin = \lMu,
    && \lLambda\hRho{\alpha,\beta} = \hRho{\bar{\alpha},\beta}\lLambda,
    && \lMu\hRho{\alpha,\beta} = \hRho{\alpha,\bar{\beta}}\lMu,
\end{align*}
Define the following groups
\begin{align*}
    &\ADiffLpq{0,1}     := \langle \RotSub, \lLambda, \lMu \rangle,
    &&\AFolDiffLpq{0,1} := \langle \ADiffLpq{0,1}, \lSigmaMin, \lSigmaPl \rangle.
\end{align*}
Then $\RotSub$ is the identity path component of each of them, and it follows from the above identities that
\begin{align*}
    &\ADiffLpq{0,1} =
    \langle\lRot{\alpha,1},\lLambda \mid \alpha\in\Circle\rangle
    \times
    \langle\lRot{1,\beta},\lMu \mid \beta\in\Circle\rangle =
    \Ort(2)\times \Ort(2), \\
    &\pi_0\ADiffLpq{1,0} = \langle \lLambda, \lMu \rangle = \bZ_2\times\bZ_2, \\
    &\AFolDiffLpq{0,1} :=
    \langle \RotSub, \lLambda, \lMu, \lSigmaMin, \lSigmaPl \rangle =
    \langle \RotSub, \lSigmaMin, \lSigmaPl \rangle, \\
    &\pi_0\AFolDiffLpq{0,1} = \langle \lSigmaMin, \lSigmaPl \rangle \cong \Dih(\bZ_4) = \bD_{4}.
\end{align*}

\item
Let $\Lpq{p}{q} = \Lpq{2}{1} = \RP{3}$, so $\mXimatr = \amatr{1}{2}{1}{1}$.
Then $\mXimatr\mLambda\mDtwist=\mDtwist\mMu\mXimatr=-\mXimatr$, $\mXimatr\mTau=\mTau\mXimatr$, and $\FolLpDiffLpq{2}{1}$ contains the following diffeomorphisms of order two:
$\diflpdir{\lTheta}{\hLambda\hDtwist}{\hDtwist\hMu}$ and $\diflpdir{\lTau}{\hTau}{\hTau}$.
Since $(p,q)=(2,1)$, we have that $q^2=-1(\bmod\ 2)$ and thus the diffeomorphism $\diflprew{\lSigmaMin}{\hLambda}{\hMu}$ is defined.

On the other hand, $\Lpq{2}{1}$ is also diffeomorphic to the lens space $\Lpq{2}{-1}$ given by the matrix $\mXimatr'=\mXimatr\mDtwist^{-1} = \amatr{-1}{2}{0}{1}$ for which $\diflprew{\lSigmaPl'}{\id}{\id}$ is also defined.
To transfer $\lSigmaPl'$ to a diffeomorphism of $\Lpq{2}{1}$ we need to ``conjugate'' it by $\hDtwist$.

More precisely, note that $E = \mXimatr'\mXimatr'= \mXimatr\mDtwist^{-1}\mXimatr\mDtwist^{-1}$, whence $\mDtwist = \mXimatr\mDtwist^{-1}\mXimatr$, and we get the following diffeomorphism $\diflprew{\lSigmaPl}{\hDtwist}{\hDtwist^{-1}}$ of $\Lpq{2}{1}$.
Define the following groups
\begin{align*}
    &\ADiffLpq{2,1} := \langle \RotSub, \lTheta, \lTau\rangle,
    &&\AFolDiffLpq{2,1} := \langle \ADiffLpq{2,1}, \lSigmaMin, \lSigmaPl \rangle.
\end{align*}
Then again $\RotSub$ is the identity path component of each of them.
Moreover,
\begin{align*}
    &\lSigmaPl^2=\lSigmaMin^4=\id,
    &&\lSigmaPl\lSigmaMin\lSigmaPl=\lSigmaMin^{-1},
    &&\lSigmaMin^2 = \lTau,
    &&\lSigmaPl\lSigmaMin = \lTheta,
    &&\lTheta\lTau=\lTau\lTheta,
\end{align*}
which imply that $\AFolDiffLpq{2,1} :=
\langle \RotSub, \lTheta, \lTau, \lSigmaMin, \lSigmaPl \rangle = \langle \RotSub, \lSigmaMin, \lSigmaPl \rangle$, and
\begin{align*}
    &\pi_0\ADiffLpq{2,1} = \langle \lTheta, \lTau \rangle = \bZ_2\times\bZ_2,
    &&\pi_0\AFolDiffLpq{2,1} = \langle \lSigmaMin, \lSigmaPl \rangle \cong \Dih(\bZ_4) = \bD_{4}.
\end{align*}

\item
In all other cases $p>2$ and $\Diff(\Lpq{p}{q})$ still contains the diffeomorphism $\diflpdir{\lTau}{\hTau}{\hTau}$.
Then we put
\begin{align*}
    &\ADiffLpq{p,q} := \langle \RotSub, \lTau\rangle \cong \Dih(\RotSub),
    &&\AFolDiffLpq{p,q} := \langle \ADiffLpq{p,q}, \lSigmaMin, \lSigmaPl\rangle.
\end{align*}
Note that $\RotSub$ is the identity path component of each of these groups.
Let us specify them more precisely.

\begin{itemize}[leftmargin=*]
\item
Suppose $q^2 = 1(\bmod\ p)$, so $\diflprew{\lSigmaPl}{\id}{\id}$ is defined.
Then $\lSigmaPl\circ\lTau=\lTau\circ\lSigmaPl$, whence
\begin{align*}
    &\AFolDiffLpq{p,q} = \langle \RotSub, \lTau, \lSigmaPl \rangle,
    &&\pi_0\AFolDiffLpq{p,q} = \bZ_2 \times \bZ_2.
\end{align*}

\item
Suppose $q^2 = -1 (\bmod\ p)$, so $\diflprew{\lSigmaMin}{\hLambda}{\hMu}$ is defined.
Then $\lSigmaMin^2 = \lTau$, whence
\begin{align*}
    &\AFolDiffLpq{p,q} = \langle \RotSub, \lSigmaMin \rangle,
    &&\pi_0\AFolDiffLpq{p,q} = \bZ_4.
\end{align*}

\item
Otherwise, $q^2\not= \pm 1(\bmod\ p)$, so neither $\lSigmaMin$ nor $\lSigmaPl$ are defined, and
\begin{align*}
    &\AFolDiffLpq{p,q} = \ADiffLpq{p,q} = \langle \RotSub, \lTau\rangle \cong \Dih(\RotSub),
    &&\pi_0\AFolDiffLpq{p,q} = \pi_0\ADiffLpq{p,q} = \bZ_2.
\end{align*}
\end{itemize}
\end{enumerate}
\end{enumerate}

\subsection*{Smale conjecture}
Recall that \term{Smale conjecture} for a manifold $\Mman$ with a ``good'' Riemannian metric is a statement that the inclusion $\Isom(\Mman) \subset \Diff(\Mman)$ of the group of isometries of $\Mman$ into the group of all its diffeomorphisms is a homotopy equivalence.

It is also well known that each lens space $\Lpq{p}{q}$ (except for $\Lpq{0}{1}=S^1\times S^2$) can be described a the quotient of the unit $3$-sphere $S^3$ in $\bC^2$ by a free action of $\bZ_p$ generated by the diffeomorphism
\[
    \delta_{p,q}\colon S^3\to S^3,
    \qquad
    \delta_{p,q}(z_{1},z_{2}) = (e^{2\pi i/p}\cdot z_{1}, e^{2\pi iq/p}\cdot z_{2}).
\]
In particular, $\Lpq{p}{q}$ has a natural metric, called \term{elliptic}, induced from the standard metric on $S^3$ via the corresponding covering map $S^3\to\Lpq{p}{q}$.
The group of isometries $\Isom(\Lpq{p}{q})$ of this metric is described in~\cite[Theorem~2.3]{KalliongisMiller:KMJ:2002}, which, in particular, implies that $\ADiffLpq{p,q} \subset \AFolDiffLpq{p,q} \subset\Isom(\Lpq{p}{q})$, so the above groups consist of isometries, and these three groups coincide exactly when neither $\lSigmaMin$ nor $\lSigmaPl$ is defined.
Thus,
\begin{equation}\label{equ:Isom_Lpq__no_sigma}
    \ADiffLpq{p,q} = \AFolDiffLpq{p,q} = \Isom(\Lpq{p}{q})
    \qquad \Leftrightarrow \qquad
    p>2 \ \text{and} \ q^2\not= \pm 1(\bmod\ p).
\end{equation}

Note further that the Smale conjecture is proved
\begin{enumerate}[leftmargin=*, label={(\alph*)}]
\item for $\Lpq{1}{0}=S^3$, so the inclusion $\Ort(4) \subset \Diff(S^3)$ is a homotopy equivalence, A.~Hatcher~\cite{Hatcher:AnnM:1983};
\item
for all lens spaces $\Lpq{p}{q}$ with $p>2$ in the book~\cite{HongKalliongisMcCulloughRubinstein:LMN:2012};
\item
for all lens spaces except for $S^1\times S^2$ by another methods used Ricci flow in the preprint by R.~Bamler and B.~Kleiner~\cite{BalmerKleiner:RF:2019};
\end{enumerate}
For completeness, let us mention that A.~Hatcher~\cite{Hatcher:ProcAMS:1981} proved that $\Diff(S^1\times S^2)=\Diff(\Lpq{0}{1})$ has the homotopy type of $\Omega(\Ort(3))\times\Ort(3) \times\Ort(2)$.

\subsection*{Polar Morse-Bott foliation on $\Lpq{p}{q}$}
Recall that in Section~\ref{sect:main_result:all} we defined a Morse-Bott foliation $\AFoliation$ on the solid torus $\ATor$ by the central circle and $2$-tori ``parallel'' to the boundary.
In particular, $\partial\ATor$ is a leaf of $\AFoliation$.
Since $\gxi$ identifies $\partial\ATor\times\{0\}$ with $\partial\ATor\times\{1\}$, it induces a certain foliation $\AFoliation_{p,q}$ on $\Lpq{p}{q}$ whose leaves are the images of the corresponding leaves of foliations on those tori.
This foliation will be called \term{polar}.
In particular, $\AFoliation_{p,q}$ has two singular leaves (the central circles) $\Cman_0$ and $\Cman_1$ and all other leaves are $2$-tori parallel each other.

Since the above diffeomorphisms $\hRho{\alpha,\beta}$, $\hDtwist$, $\hLambda$, $\hMu$, $\hTau$ of $\ATor$ belong to $\DiffAFolLp$, it follows that $\ADiffLpq{p,q} \subset \FolLpDiffLpq{p}{q}$, while $\lSigmaPl,\lSigmaMin\in\FolDiffLpq{p}{q}\setminus\FolDiffPlLpq{p}{q}$, and thus we get the following inclusions of subgroups:
\begin{equation}
\begin{gathered}\label{equ:diag:Lpq_all_groups}
\xymatrix@R=1.2em{
    \FolLpDiffLpq{p}{q}   \ar@{^(->}[r] &
    \FolDiffPlLpq{p}{q}   \ar@{^(->}[r] &
    \FolDiffLpq{p}{q}\\
    \ADiffLpq{p,q} \ar@{^(->}[rr] \ar@{^(->}[u] && \AFolDiffLpq{p,q}  \ar@{^(->}[u]
}
\end{gathered}
\end{equation}
By Theorem~\ref{th:DiffLpq_fol_homtype_a}, $\FolLpDiffLpq{p}{q}$ is a strong deformation retract of $\FolDiffPlLpq{p}{q}$.
Moreover, we also have that:
\begin{theorem}[{\rm\cite{KhokhliukMaksymenko:JHRS:2023}}]
\label{th:hom_type__FolDiffLpq_lp}
The inclusion $\ADiffLpq{p,q} \subset \FolLpDiffLpq{p}{q}$ is a weak homotopy equivalence.
\end{theorem}
\begin{lemma}\label{lm:no_sigma_pm}
The following conditions are equivalent:
\begin{enumerate}[label={\rm(\alph*)}]
\item\label{enum:Lpq_exch_tori:no_diff}
there are no diffeomorphisms $\dif\in\Diff(\Lpq{p}{q})$ exchanging $\ATor_0$ and $\ATor_1$;
\item\label{enum:Lpq_exch_tori:p}
$p>2$ and $q^2\not=\pm1(\bmod\ p)$, i.e.\ exactly when neither $\lSigmaMin$ nor $\lSigmaPl$ are defined;
\item\label{enum:Lpq_exch_tori:Lgroups}
$\AFolDiffLpq{p,q}=\ADiffLpq{p,q}$;
\item\label{enum:Lpq_exch_tori:fol}
$\FolDiffPlLpq{p}{q}=\FolDiffLpq{p}{q}$, i.e.\ every $\dif\in\FolDiffLpq{p}{q}$ leaves invariant each $\Cman_0$ and $\Cman_1$.
\end{enumerate}
\end{lemma}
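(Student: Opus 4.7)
The plan is to establish the cyclic chain $(b)\Leftrightarrow(c)$ together with $(b)\Rightarrow(a)\Rightarrow(d)\Rightarrow(b)$, using that $(b)$ is equivalent, essentially by inspection, to the statement that neither $\lSigmaPl$ nor $\lSigmaMin$ is defined for the given $(p,q)$: the arithmetic conditions for their existence are $q^2\equiv 1\pmod p$ and $q^2\equiv -1\pmod p$ respectively, both of which are automatic when $p\le 2$. The equivalence $(b)\Leftrightarrow(c)$ is then essentially definitional, since by construction $\AFolDiffLpq{p,q}$ is obtained from $\ADiffLpq{p,q}$ by adjoining those of $\lSigmaPl,\lSigmaMin$ that exist, and every generator of $\ADiffLpq{p,q}$ listed in the construction ($\RotSub$, $\lTau$, $\lDtwist$, $\lLambda$, $\lMu$, $\lTheta$) acts as a pair of diffeomorphisms separately on $\ATor_0$ and $\ATor_1$, hence preserves each $\ATor_i$, whereas $\lSigmaPl$ and $\lSigmaMin$ both swap $\ATor_0$ and $\ATor_1$.

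For the implications involving $(a)$ and $(d)$ I would use the leaf space of the Morse--Bott function $\gfunc\colon\Lpq{p}{q}\to[0;2]$ constructed above: any $\dif\in\FolDiffLpq{p}{q}$ induces a self-homeomorphism of the interval $[0;2]$ that must preserve the set of singular values $\{0,2\}$, and hence either fixes both endpoints (in which case $\dif$ preserves each $\ATor_i$ and each $\Cman_i$) or swaps them (in which case $\dif$ exchanges $\ATor_0\leftrightarrow\ATor_1$ as well as $\Cman_0\leftrightarrow\Cman_1$). Thus, for foliated $\dif$, swapping central circles is equivalent to swapping solid tori. The implication $(a)\Rightarrow(d)$ is then immediate since $\FolDiffLpq{p}{q}\subset\Diff(\Lpq{p}{q})$, and $(d)\Rightarrow(b)$ follows by contraposition: whenever $\lSigmaPl$ or $\lSigmaMin$ is defined, it belongs to $\FolDiffLpq{p}{q}$ and exchanges $\ATor_0\leftrightarrow\ATor_1$, hence also $\Cman_0\leftrightarrow\Cman_1$, so $\FolDiffPlLpq{p}{q}\subsetneq\FolDiffLpq{p}{q}$.

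The principal obstacle is the remaining implication $(b)\Rightarrow(a)$: assuming $p>2$ and $q^2\not\equiv\pm 1\pmod p$, I must show that no $\dif\in\Diff(\Lpq{p}{q})$ can swap $\ATor_0$ and $\ATor_1$. My approach is to derive a contradiction from the action on $\pi_1$. Writing $\mXimatr=\amatr{r}{p}{s}{q}$ with $rq-ps=-1$, a standard van Kampen computation gives $\pi_1(\Lpq{p}{q})=\bZ/p\bZ$ generated by $[\Cman_1]$ with $[\Cman_0]=r\,[\Cman_1]$. If $\dif$ swaps the tori, then each of its restrictions $\dif|_{\ATor_i}\colon\ATor_i\to\ATor_{1-i}$ is a diffeomorphism of solid tori and thus carries the core of one to a curve freely homotopic to $\pm$ the core of the other; since $\pi_1(\Lpq{p}{q})$ is abelian, free homotopy classes coincide with elements, so $\dif_*[\Cman_0]=\pm[\Cman_1]$ and $\dif_*[\Cman_1]=\pm[\Cman_0]=\pm r\,[\Cman_1]$ in $\bZ_p$. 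Applying $\dif_*$ to the relation $[\Cman_0]=r\,[\Cman_1]$ and combining with these two identities forces $r^2\equiv\pm 1\pmod p$; since $rq\equiv -1\pmod p$ gives $r\equiv -q^{-1}\pmod p$, this is equivalent to $q^2\equiv\pm 1\pmod p$, contradicting $(b)$.
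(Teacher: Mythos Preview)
Your chain of implications and most of the individual steps are sound. In particular, your $\pi_1$-argument for $(b)\Rightarrow(a)$ is correct and gives an explicit proof where the paper simply cites the equivalence $(a)\Leftrightarrow(b)$ as ``well known''; this is a genuine improvement. The equivalence $(b)\Leftrightarrow(c)$ and the implication $(d)\Rightarrow(b)$ are handled exactly as in the paper.

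There is, however, a real gap in your argument for $(a)\Rightarrow(d)$. You correctly observe that any $\dif\in\FolDiffLpq{p}{q}$ induces a self-homeomorphism $\phi$ of the leaf space $[0;2]$ which either fixes or swaps the endpoints $\{0,2\}$, and that in the second case $\dif$ exchanges $\Cman_0\leftrightarrow\Cman_1$. But your further claim that $\dif$ then ``exchanges $\ATor_0\leftrightarrow\ATor_1$'' is unjustified: by definition $\ATor_0=\gfunc^{-1}([0;1])$ and $\ATor_1=\gfunc^{-1}([1;2])$, so $\dif(\ATor_0)=\gfunc^{-1}(\phi([0;1]))=\gfunc^{-1}([\phi(1);2])$, and this equals $\ATor_1$ only when $\phi(1)=1$, which need not hold. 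Thus such an $\dif$ is not directly a witness against $(a)$.

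This is precisely the point the paper addresses. Its proof of $(a)\Rightarrow(d)$ takes a foliated $\dif$ with $\dif(\gfunc^{-1}(1))=\gfunc^{-1}(\tau)$ for some $\tau\neq 1$, chooses a diffeomorphism $\mu$ of $(0;2)$ fixed near the ends with $\mu(\tau)=1$, and uses a product structure $\Lpq{p}{q}\setminus(\Cman_0\cup\Cman_1)\cong T^2\times(0;2)$ (obtained from a gradient flow of $\gfunc$) to build a diffeomorphism $\dif'$ agreeing with $\dif$ near $\Cman_0\cup\Cman_1$ but with $\dif'(\gfunc^{-1}(1))=\gfunc^{-1}(1)$. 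This $\dif'$ then literally exchanges $\ATor_0$ and $\ATor_1$, contradicting $(a)$. You need some such adjustment step to close the gap.
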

\begin{proof}
In fact, the equivalence~\ref{enum:Lpq_exch_tori:no_diff}$\Leftrightarrow$\ref{enum:Lpq_exch_tori:p} is well known,
\ref{enum:Lpq_exch_tori:p}$\Leftrightarrow$\ref{enum:Lpq_exch_tori:Lgroups} follows from the definition of $\AFolDiffLpq{p,q}$, and the implication~\ref{enum:Lpq_exch_tori:fol}$\Rightarrow$\ref{enum:Lpq_exch_tori:p} is evident.

\ref{enum:Lpq_exch_tori:no_diff}$\Rightarrow$\ref{enum:Lpq_exch_tori:fol}.
Let $\Mman = \Lpq{p}{q}\setminus(\Cman_0\cup\Cman_1) = \gfunc^{-1}\bigl((0;2)\bigr)$.
Then $\gfunc$ has no critical points in $\Mman$ and each level set $\gfunc^{-1}(t)$, $t\in(0;2)$, is diffeomorphic to a $2$-torus.
Moreover, using a standard technique with some gradient flow of $\gfunc$, one can construct a diffeomorphism  $\psi\colon\Mman\to T^2\times(0;2)$ such that $\func\circ\psi^{-1}(\py,t) = t$, $t\in(0;2)$.

Now suppose that~\ref{enum:Lpq_exch_tori:fol} fails, so there exists $\dif\in\FolDiffLpq{p}{q}\setminus\FolDiffPlLpq{p}{q}$.
If $\dif(\gfunc^{-1}(1))=1$, then $\dif$ exchanges $\ATor_0$ and $\ATor_1$, so~\ref{enum:Lpq_exch_tori:no_diff} also fails.
Suppose $\dif(\gfunc^{-1}(1)) = \gfunc^{-1}(\tau)$ for some $\tau\in(0;2)$ and let $\mu\colon (0;2)\to(0;2)$ be a diffeomorphism fixed near $0$ and $2$ and such that $\mu(\tau)=1$, so it gives the diffeomorphism $k'\colon T^2\times(0;2)\to T^2\times(0;2)$, $k'(\py,t)=k(\py,\mu(t))$.
Notice also that $\dif(\Mman)=\Mman$, whence we have a diffeomorphism $k = \psi\circ\dif\circ\psi^{-1}\colon T^2\times(0;2) \to T^2\times(0;2)$ such that $k(T^2\times 1) = T^2\times \tau$.
Define the following diffeomorphism $\dif'\in\Diff(\Lpq{p}{q})$ by
\[
    \dif'(\px)=
    \begin{cases}
        \psi^{-1}\circ k'\circ k\circ\psi(\px), & \px\in\Mman,\\
        \dif(\px),                              & \px\in\Cman_0\cup\Cman_1.
    \end{cases}
\]
Then $\dif'\in\FolDiffLpq{p}{q}\setminus\FolDiffPlLpq{p}{q}$ coincides with $\dif$ near $\Cman_0\cup\Cman_1$ and leaves invariant $\gfunc^{-1}(1)$.
\end{proof}

\begin{theorem}\label{th:DiffLpq_fol_homtype}
{\rm 1)}~Suppose $p>2$ and $q^2\not=\pm1(\bmod\ p)$.
Then we have the following commutative diagram of inclusions in which the arrows denoted by (w.)h.e. are (weak) homotopy equivalences (and new statements of this paper are written in bold):
\[
\xymatrix@C=6em{
   \FolLpDiffLpq{p}{q}
   \ar@{^(->}[r]^-{\textbf{h.e.}}_-{\text{Theorem~\ref{th:DiffLpq_fol_homtype_a}}} &
   \FolDiffPlLpq{p}{q} \ar@{=}[r]_-{\text{Lemma~\ref{lm:no_sigma_pm}}} &
   \FolDiffLpq{p}{q}
   \ar@{^(->}[d]^-{\textbf{w.h.e.}}  \\
   \ADiffLpq{p,q} = \AFolDiffLpq{p,q} = \langle\RotSub,\lTau\rangle
   \ar@{^(->}[u]^-{\text{w.h.e.}}_-{\text{Theorem~\ref{th:hom_type__FolDiffLpq_lp}}}
   \ar@{=}[r]_-{\text{\cite[Theorem~2.3]{KalliongisMiller:KMJ:2002}}} &
   \Isom(\Lpq{p}{q})
   \ar@{^(->}[r]^-{\text{h.e.}}_-{\text{Smale conjecture}} &
   \Diff(\Lpq{p}{q})
}
\]
In particular, each of those groups has the homotopy type of the disjoint union of two $2$-tori $\RotSub\sqcup\RotSub$.

{\rm 2)}~For all other values of $(p,q)$, i.e.\ when either $\lSigmaMin$ or $\lSigmaPl$ is defined, the inclusion of pairs
\begin{equation}\label{equ:incl_Lgroups_Folgroups}
\bigl( \AFolDiffLpq{p,q}, \ADiffLpq{p,q} \bigr)
\ \subset \
\bigl( \FolDiffLpq{p}{q}, \ \FolDiffPlLpq{p}{q} \bigr)
\end{equation}
is a weak homotopy equivalence.
In particular, each path component of each of those groups is also homotopy equivalent to a $2$-torus $\RotSub$, and we also have the following diagram:
\begin{equation}
    \begin{gathered}\label{equ:diag:Lpq_all_groups1}
    \xymatrix@R=1.2em{
        \FolLpDiffLpq{p}{q}   \ar@{^(->}[r]^-{h.e.} &
        \FolDiffPlLpq{p}{q}   \ar@{^(->}[r] &
        \FolDiffLpq{p}{q}\\
        \ADiffLpq{p,q} \ar@{^(->}[rr] \ar@{^(->}[u]^-{w.h.e.} && \AFolDiffLpq{p,q}  \ar@{^(->}[u]_-{w.h.e.}
    }
    \end{gathered}
\end{equation}
in which non-marked arrows are inclusions of index $2$ subgroups.
\end{theorem}
\begin{proof}
1) The left vertical and all horizontal arrows are explained at the diagram.
They imply that the right vertical arrow is also a weak homotopy equivalence.

2) By Lemma~\ref{lm:no_sigma_pm}, $\AFolDiffLpq{p,q} \not= \ADiffLpq{p,q}$ and $\FolDiffLpq{p}{q} \not= \FolDiffPlLpq{p}{q}$.
Hence, $\AFolDiffLpq{p,q}$, resp.\ $\FolDiffLpq{p}{q}$, is a union of two adjacent classes by the group $\ADiffLpq{p,q}$, resp.\ $\FolDiffPlLpq{p}{q}$.
By 1) the inclusion $\sincl\colon\ADiffLpq{p,q}\subset\FolDiffPlLpq{p}{q}$ is a weak homotopy equivalence, i.e.\ $\sincl\colon\pi_0\ADiffLpq{p,q}\to\pi_0\FolDiffPlLpq{p}{q}$ is a bijection, and $j$ yields a weak homotopy equivalence between the corresponding identity path components.
Hence, so must be the inclusion of pairs~\eqref{equ:incl_Lgroups_Folgroups}.
\end{proof}

\subsection*{Awknowledgement}
The author is sincerely grateful to anonymous Referee for very useful remarks and suggestions which allowed to improve the contents of the paper and exposition.

% \bibliographystyle{plainurl}
% \bibliography{biblio}

\end{document}